\author{J\"{o}rg Sch\"{u}rmann and Shoji Yokura}
\address
{J\"{o}rg Sch\"{u}rmann:
Westf. Wilhelms-Universit\"{a}t,
Mathematisches Institut, Einsteinstrasse 62,
48149 M\"{u}nster, Germany }
\email {jschuerm@math.uni-muenster.de}
\address
{Department of Mathematics and Computer Science, 
Faculty of Science, 
Kagoshima University, 21-35 Korimoto 1-chome, Kagoshima 890-0065, Japan}
\email {yokura@sci.kagoshima-u.ac.jp}
\title
{Grothendieck groups and 
a categorification of additive invariants}
\thanks {
\\
(1) J\"{o}rg Sch\"{u}rmann: supported by the SFB 878 ``groups, geometry and actions''.\\
(2) Shoji Yokura: partially supported by Grant-in-Aid for Scientific Research
(No. 21540088), the Ministry of Education, Culture, Sports, Science and Technology (MEXT), Japan \\
\emph{Keywords}: homology class, characteristic homology class, categorification, geometric homology, bordism, Steenrod's problem.\\
\emph{Mathematics Subject Classification 2000}: 55Nxx, 14C17, 14C40, 18A99.}
\begin{document} 
\numberwithin{equation}{section}
\newtheorem{thm}[equation]{Theorem}
\newtheorem{pro}[equation]{Proposition}
\newtheorem{prob}[equation]{Problem}
\newtheorem{cor}[equation]{Corollary}
\newtheorem{con}[equation]{Conjecture}
\newtheorem{lem}[equation]{Lemma}
\theoremstyle{definition}
\newtheorem{ex}[equation]{Example}
\newtheorem{defn}[equation]{Definition}
\newtheorem{rem}[equation]{Remark}
\renewcommand{\rmdefault}{ptm}
\def\alp{\alpha}
\def\be{\beta}
\def\jeden{1\hskip-3.5pt1}
\def\om{\omega}
\def\bigstar{\mathbf{\star}}
\def\ep{\epsilon}
\def\vep{\varepsilon}
\def\Om{\Omega}
\def\la{\lambda}
\def\La{\Lambda}
\def\si{\sigma}
\def\Si{\Sigma}
\def\Cal{\mathcal}
\def\m {\mathcal}
\def\ga{\gamma}
\def\Ga{\Gamma}
\def\de{\delta}
\def\De{\Delta}
\def\bF{\mathbb{F}}
\def\bH{\mathbb H}
\def\bPH{\mathbb {PH}}
\def \bB{\mathbb B}
\def \bA{\mathbb A}
\def \bR{\mathbb R}
\def \bC{\mathbb C}
\def \bOB{\mathbb {OB}}
\def \bM{\mathbb M}
\def \bOM{\mathbb {OM}}
\def \mA{\mathcal A}
\def \mB{\mathcal B}
\def \mC{\mathcal C}
\def \mR{\mathcal R}
\def \mH{\mathcal H}
\def \mM{\mathcal M}
\def \mO{\mathcal O}
\def \mTOP{\mathcal {TOP}}
\def \mAB{\mathcal {AB}}
\def \bK{\mathbb K}
\def \bG{\mathbf G}
\def \bL{\mathbf L}
\def\bN{\mathbb N}
\def\bR{\mathbb R}
\def\bP{\mathbb P}
\def\bZ{\mathbb Z}
\def\bC{\mathbb  C}
\def \bQ{\mathbb Q}
\def\op{\operatorname}

\maketitle

\begin{abstract} A topologically-invariant and additive homology class is mostly not a natural transformation as it is. In this paper we discuss turning such a homology class into a natural transformation; i.e., a ``categorification" of it. In a general categorical set-up we introduce \emph{a generalized relative Grothendieck group} from a cospan of functors of categories and also consider \emph{a categorification of additive invariants on objects}. As an example, we obtain a general theory of characteristic homology classes of singular varieties.
\end{abstract}

\section{Introduction}\label{intro}

Characteristic classes are important invariants in modern geometry and topology to investigate other 
invariants. Classically characteristic classes with values in cohomology theory were
considered for real or complex vector bundles. The main feature of them 
is that they are formulated as natural transformations from the 
contravariant functor of vector bundles to the cohomology
theory. When it comes to the case of possibly singular
varieties, characteristic classes are considered in
homology theory, instead of cohomology theory and still
formulated as \emph{natural transformations from a 
covariant functor $\m F$ to a (suitable) homology
theory $H_*$, satisfying the normalization condition that for a smooth variety $X$ the value of a distinguished element $\Delta_X$ of $\m F(X)$ is equal to the Poincar\'e dual of the corresponding characteristic cohomology class of the tangent bundle}:
$$\tau_{c\ell}: \m F(-) \to H_*(-) \quad \text{such that for $X$ smooth} \quad \tau_{c\ell}(\Delta_X) = c\ell(TX) \cap[X].$$
Most important and well-studied theories of
characteristic homology classes of singular varieties are the following ones,
where we consider here for simplicity 
only the category of complex algebraic varieties
and functoriality is 
required only for proper morphisms.
Below, in (1) and (2) the homology theory $H_*(X)$ is either Chow groups $CH_*(X)$ or the even-degree Borel--Moore homology groups $H^{BM}_{2*}(X)$, whereas in (3) $X$ is assumed to be compact and thus the homology theory $H_*(X)$ is the usual even-degree homology group $H_{2*}(X)$:

\begin{enumerate}
\item MacPherson's Chern class \cite{MacPherson} $c^{Mac}_*: F(-) \to H_*(-)$ is the unique natural transformation from the covariant functor $F$ of constructible functions to the homology $H_*$,
satisfying the normalization condition that for a smooth variety $X$ the value of the characteristic function is the Poincar\'e dual of the total Chern class of the tangent bundle: $$c^{Mac}_*(\jeden_X) = c(TX) \cap [X].$$
\item Baum--Fulton--MacPherson's Todd class \cite{BFM} $td^{BFM}_*: G_0(X) \to H_*(X)\otimes \bQ$ is the unique natural transformation from the covariant functor $G_0$ of Grothendieck groups of coherent sheaves to the rationalized homology $H_*\otimes \bQ$,
satisfying the normalization condition that for a smooth variety $X$ the value of (the class of) the structure sheaf is the Poincar\'e dual of the total Todd class of the tangent bundle: $$td^{BFM}_*([\m O_X]) = td(TX) \cap [X].$$
\item Goresky-- MacPherson's homology $L$-class \cite{GM}, which is extended as a natural transformation by  Cappell-Shaneson \cite{CS} (also see \cite{Yokura-TAMS}), $L^{CS}_*: \Omega(X) \to  H_*(X)\otimes \bQ$ is a natural transformation from the covariant functor $\Omega$ of cobordism groups of self-dual constructible sheaf complexes to the rationalized homology theory $H_*\otimes \bQ$,
satisfying the normalization condition that for a smooth variety $X$ the value of the
(class of the) shifted constant sheaf is the Poincar\'e dual of the total Hirzebruch--Thom's $L$-class of the tangent bundle: $$L^{CS}_*([\bQ_X[\op{dim} X]]) = L(TX) \cap [X].$$ Here $X$ is assumed to be compact.\\
\end{enumerate}

Recently these three theories $c^{Mac}_*, td^{BFM}_*$ and $L^{CS}_*$ are ``unified" by the motivic Hirzeruch class  \cite{BSY} (see also \cite{SY}, \cite{Schuermann-MSRI} and \cite{Yokura-MSRI}) ${T_y}_*: K_0(\m V_{\bC}/X) \to H_*(X)\otimes \bQ[y]$, which is the unique natural transformation such that for a smooth variety $X$ the value of the isomorphism class of the identity $X \xrightarrow {id_X} X$  is the Poincar\'e dual of the generalized Hirzebruch--Todd class $T_y(TX)$ of the tangent bundle: $${T_y}_*([X \xrightarrow {id_X} X]) = T_y(TX) \cap [X].$$

Here $K_0(\m V_{\bC}/X)$ is the relative Grothendieck group of the category $\m V_{\bC}$ of complex algebraic varieties, i.e., the free abelian group generated by the isomorphism classes $[V \xrightarrow {h} X]$ of morphism $h \in hom_{\m V_{\bC}}(V, X)$ modulo the relations
\begin{itemize}
\item $[V_1 \xrightarrow {h_1} X] + [V_2 \xrightarrow {h_2} X] =  [V_1 \sqcup V_2 \xrightarrow {h_1 + h_2} X],$ with $\sqcup$ the disjoint union, and
\item  $[V \xrightarrow {h} X] = [V \setminus W \xrightarrow {h_{V \setminus W}} X] +  [ W \xrightarrow {h_{W}}  X]$ for $W \subset V$  a closed subvariety of $V$. 
\end{itemize}
The generalized Hirzebruch--Todd class $T_y(E)$ of the complex vector bundle $E$ (see \cite{Hirzebruch, HBJ}) is defined to be 
$$T_y(E) := \prod _{i=1}^{\op {rank} E} \left (\frac {\alp _i(1+y)}{1-e^{-\alp _i(1+y)}} - \alp _i y \right ) \in H^*(X) \otimes \bQ[y],$$
where $\alp _i$ is the Chern root of $E$, i.e., $\displaystyle c(E) = \prod_{i=1}^{\op{rank} E} (1 + \alp_i).$ Note that 
\begin{enumerate}
\item $T_{-1}(E) =c(E)$ is the Chern class, 
\item $T_{0}(E) =td(E)$ is the Todd class and 
\item $T_{1}(E) =L(E)$ is the Thom--Hirzebruch $L$-class.\\
\end{enumerate}
The ``unification" means the existence of the following commutative diagrams of natural transformations:

$$\xymatrix{
 K_0(\m V_{\bC}/X) \ar[dd]_{{T_{-1}}_*} \ar[dr]^{\, \,const} &&  K_0(\m V_{\bC}/X) \ar[dd]_{{T_{0}}_*}  \ar[dr]^{\, \,coh}  &&  K_0(\m V_{\bC}/X)\ar[dd]_{{T_{1}}_*}  \ar[dr]^{\, \,sd} \\
 & F(X) \ar[dl] ^{c^{Mac}_*\otimes \bQ} && G_0(X) \ar[dl]^{td^{BFM}_*}  && \Omega(X)  \ar[dl]^{L^{CS}_*} \\
 H_*(X)\otimes \bQ, &&  H_*(X)\otimes \bQ, &&  H_*(X)\otimes \bQ. }
$$

Here $const:K_0(\m V_{\bC}/X) \to F(X)$ is defined by $const([V \xrightarrow {h} X]):= h_* \jeden _V$. The other two comparison transformations are characterized by $coh([V \xrightarrow {h} X]) = h_*\m ([O_V])$ and $sd([V \xrightarrow {h} X]) = h_*([\bQ_V[\op{dim}V]])$ for $V$ smooth and $h$ proper.
${T_y}_*(X) := {T_y}_*([X \xrightarrow {id_X} X])$ is called the \emph{motivic Hirzebruch class of $X$}. For more details see \cite{BSY}.\\

Besides these characteristic classes formulated as natural
transformations, there are several important homology
classes which are usually not formulated or not captured as such natural transformations; for example: Chern-Mather class $c_*^M(X)$ (\cite{MacPherson}), Segre--Mather class $s_*^M(X)$ (\cite{Yokura-segre}), Fulton's canonical Chern class $c^F_*(X)$ (\cite{Fulton-book}), Fulton--Johnson's Chern class $c^{FJ}_*(X)$  (\cite{Fulton-Johnson, Fulton-book}), Milnor class $\m M(X)$ (e.g., see \cite{Aluffi}, \cite{BLSS}, \cite{PP}, \cite{Yokura-milnor}, etc.), which is (up to sign) the difference of the MacPherson's Chern class and the Fulton--Johnson's Chern class, etc. 

In \cite{Yokura-JS} we captured Fulton--Johnson's Chern class and the Milnor class $\m M(X)$ as natural transformations, with the latter one as a special case of the Hirzebruch--Milnor class (also see \cite{CMSS}), using the motivic Hirzebruch class.
In this paper we generalize the approach and results of \cite{Yokura-JS} to a more general
abstract categorical context (cf. \cite{Yokura-RIMS}).\\

Let $\m B$ be a category with a \emph{coproduct} $\sqcup$.  Here we assume that a fixed coproduct
$X \sqcup Y$ of any two objects $X,Y\in ob(\m B)$ has been choosen, as well as an 
initial object $\emptyset$ in $\m B$, so that we can view $(\m B,\sqcup)$ as a
symmetric monoidal category with unit $\emptyset$.
Examples are  the category $\mathcal{TOP}$ (resp., $\mathcal {TOP}_{lc}$) of (locally compact) topological spaces, 
the category $\mC^{\infty}$ of $\m C^{\infty}$-manifolds,
or the category $\mathcal V_k$ of algebraic varieties (i.e., reduced separated schemes of finite type) over a base field $k$, with the coproduct $\sqcup$ given by the usual disjoint union. 
Moreover, for $\m B=\mathcal {TOP}_{lc}, \mC^{\infty}$ or $\mathcal V_k$, we often consider them as categories only with respect to proper morphisms.
Let $\mathcal {AB}$ be the category of abelian groups with the coproduct $\sqcup$ given by the direct sum $\oplus$. Then a covariant functor $\m H : \mathcal B \to \mathcal {AB}$ is called  \emph{additive} (cf. \cite{Levine-Morel}), if it preserves the coproduct structure $\sqcup$, i.e., if we get the following isomorphism:
$$(i_X)_*\oplus (i_Y)_*: \m H(X) \oplus \m H(Y) \xrightarrow {\cong} \m H(X \sqcup Y)$$
where $i_X: X \to X \sqcup Y$ and $i_Y:Y \to X \sqcup Y$ are the canonical injections. In particular, $\m H(\emptyset) = \{0\}$. Examples of such an additive functor $\mH$ are a generalized homology theory $H_*$ for $\m B = \m {TOP}$, the Borel--Moore homology theory 
$H_*^{BM}$ for $\m B = \m {TOP}_{lc}$, the functor $G_0$ of Grothendieck groups of coherent sheaves or the  Chow groups $CH_*$ for $\m B = \m V_k$, as well as the functor $F$ of constructible functions or $\Omega$ of cobordism groups of self-dual constructible sheaf complexes for $\m B = \m V_{\bC}$.\\

Let $\mC$ be a category of ``(topological) spaces with some additional structures \emph{stable} under 
$\sqcup$", such as the category of (complex) algebraic varieties, the category of (compact) topological spaces, or the category of oriented smooth (i.e., $C^{\infty}$-) manifolds, etc. 
More abstractly, $\mC$ is a \emph{symmetric monoidal category} $(\mC, \sqcup)$ together with
a strict monoidal forgetful functor
 $\frak f: (\m C, \sqcup) \to (\m B, \sqcup)$. Then an \emph {additive invariant} on objects in $ob(\mC)$ with values in the additive functor $\m H$ is given by an element $\alp(X) \in \mH(\frak f(X))$ for all $X \in ob(\m C)$ satisfying 
$$\alp(X \sqcup Y) = (i_{\frak f(X)})_* \alp(X) + (i_{\frak f(Y)})_*\alp(Y),$$ 
where $i_{\frak f(X)}$ and $i_{\frak f(Y)}$ are the corresponding canonical inclusions.
Note that the sum or difference $\alpha\pm \beta$ of two additive invariants with values in 
$\m H$ is again such an invariant.\\

\begin{ex}\label{singular} 
Let $\m B=\m V_{\bC}$ be the category of complex algebraic varieties and $\m C$ be a full subcategory of $\m B$ which is stable under disjoint union, isomorphisms and contains the initial object $\emptyset$ and the final object $pt$ (e.g., \emph{an admissible subcategory} in the sense of Levine--Morel \cite{Levine-Morel}), such as the category $\m V_{\bC}^{sm}$ of smooth varieties, $\m V^{lpd}_{\bC}$ of locally pure dimensional varieties, $\m V_{\bC}^{emb}$ of varieties embeddable into smooth varieties, or $\m V_{\bC}^{lci}$ of local complete intersections. Then we have the following additive invariants, some of which are already mentioned above:
\begin{enumerate}
\item characteristic homology classes $c\ell_*(X) := c\ell(TX) \cap [X] \in H_*(X)\otimes R$ for $X \in ob(\m V^{sm}_{\bC})$ with $[X]$ the fundamental class and $TX$ the tangent bundle, or the corresponding virtual classes $c\ell_*(X)$ for $X \in ob(\m V^{lci}_{\bC})$, with $TX$ the virtual tangent bundle and $c\ell(TX)$  a characteristic cohomology class.
\item Fulton's canonical Chern class or Fulton--Johnson Chern class $c^F_*(X)$, $c^{FJ}_*(X)$ for $X \in ob(\m V_{\bC}^{emb})$.
\item Mather-type characteristic homology classes $c\ell^{Ma}_*(X)$, such as Chern--Mather class and Segre--Mather class, or the local Euler obstruction $Eu_X \in F(X)$ as a constructible function for $X \in ob(\m V_{\bC}^{lpd})$.
\item the (class of the) self-dual Intersection Homology complex $[\m {IC}_X] \in \Omega (X)$ and for $X$ compact its 
Goresky--MacPherson's $L$-homology class $L^{GM}(X) \in H_*(X)$ for 
$X \in ob(\m V^{lpd}_{\bC})$.
\end{enumerate}
\end{ex}

A \emph{categorification of the additive invariant $\alp$} with values in the additive functor $\m H$ is meant to be an associated \emph{natural transformation}
$$\tau_{\alp}: \m H_{\alp} \to \m H\circ \frak f$$
from a \emph{covariant functor} $\m H_{\alp}$ on the category $\m C $, such that 
$$\tau_{\alp}(\delta_X) = \alp(X)$$
for some \emph{distinguished element} $\delta_X \in \m H_{\alp}(X)$ and all $X \in ob(\m C)$.\\

To construct such a covariant functor $\m H_{\alp}$, we introduce \emph{generalized relative Grothendieck groups}, using comma categories in a more abstract categorical context.
The construction of such a covariant functor is hinted by the definition of the relative Grothendieck group $ K_0(\m V_{\bC}/X)$ and more clearly by the
description of the oriented bordism group $\Omega^{SO}_*(X)$.

The oriented bordism group $\Omega^{SO}_m(X)$ of a topological space $X$
is defined to be the free abelian group generated by the isomorphism classes $[M \xrightarrow h X]$ of \emph{continuous maps} $M \xrightarrow h X$
from \emph{closed oriented smooth manifolds} $M$ of dimension $m$  to the given \emph{topological space} $X$, modulo the following relations
\begin{enumerate}
\item $[M \xrightarrow h X] + [M' \xrightarrow {h'} X] = 
[M \sqcup M' \xrightarrow {h + h'}  X]$,
\item if $M \xrightarrow h X$ and $M' \xrightarrow  {h'} X$ are \emph{bordant}, 
then $[M \xrightarrow h X] = [M' \xrightarrow  {h'} X].$
\end{enumerate}
Note that if $M \xrightarrow h X$ and $M' \xrightarrow  {h'} X$ are isomorphic to each other, i.e., there exists an orientation preserving diffeomorphism $\phi: M \cong M'$ such that the following diagram commutes \emph{as topological spaces}:
$$\xymatrix{M \ar[dr]_ {h}\ar[rr]^ {\phi} && M' \ar[dl]^{h'}\\& X,}$$
then $M \xrightarrow h X$ and $M' \xrightarrow  {h'} X$ are clearly bordant to each other. Hence we can say that the bordism group $\Omega^{SO}_m(X)$
is defined to be the free abelian group generated by isomorphism classes of continuous maps $M \xrightarrow h X$
from \emph{closed oriented smooth manifolds} $M$ of dimension $m$  to the given \emph{topological space} $X$, modulo the  relations (1) and  (2) above. 
The bordism group $\Omega^{SO}_m(X)$ can also be described slightly differently as follows: the set of the bordism classes $[M \xrightarrow h X]$ can be turned into a commutative semi-group or monoid $\mathcal M(X)$ by the relation (1) above, with $0=[\emptyset \to X]$.
Then the Grothendieck group or the group completion of the monoid $\mathcal M(X)$ is nothing but the above bordism group $\Omega^{SO}_m(X)$. \\

In the definition of the bordism group two categories are involved:
\begin{itemize}
\item the category $\mC^{\infty}_{co}$ of closed oriented $\m C^{\infty}$-manifolds, i.e., compact oriented $\m C^{\infty}$-manifolds without boundary,
\item the category $\mTOP$ of topological spaces.
\end{itemize}
It should be emphasized that \emph{even though we consider such a finer category $\mC^{\infty}_{co}$ for a source space $M$ the map $h:M \to X$ of course has to be considered in the crude category $\mTOP$}.

The bordism group $\Omega^{SO}_*$ is a generalized homology theory, in particular $\Omega^{SO}_*$ is a covariant functor 
$$\Omega^{SO}_*: \mTOP \to \mAB,$$
 where $\mAB$ is the category of abelian groups.
 
Clearly we can consider this covariant functor on a different category finer than the category $\mTOP$ of topological spaces, e.g.,  the category $\m V_{\bC}$ of complex algebraic varieties. Namely we consider \emph{continuous} maps $h: M \to V$ from closed oriented manifolds $M$ to a complex algebraic variety $V$. Then we still get a covariant functor
$$\Omega^{SO}_*: \m V_{\bC}  \to \mAB.$$
In this set-up \emph{three different categories} $\mC^{\infty}_{co}$, $\mTOP$ and $\m V_{\bC}$ are involved. More precisely, we have the following forgetful functors $\frak f_s: \mC^{\infty}_{co} \to \mTOP$ and $\frak f_t: \m V_{\bC} \to \mTOP$ (here ``s" and ``t" mean ``source object" and ``target object):
$$\mC^{\infty}_{co} \xrightarrow {\frak f_s} \mTOP \xleftarrow {\frak f_t} \m V_{\bC}.$$
And the commutative triangle 
$$\xymatrix{M \ar[dr]_ {h}\ar[rr]^ {\phi} && M' \ar[dl]^{h'}\\& V}$$
really means
$$\xymatrix{\frak f_s(M) \ar[dr]_ {h}\ar[rr]^ {\frak f_s(\phi)} && \frak f_s (M') \ar[dl]^{h'}\\& \frak f_t(V).}$$

Abstracting this situation, we deal in this paper with a more general situation of a cospan of categories 
$$\mC_s  \xrightarrow {S} \mB \xleftarrow {T} \mC_t,$$
where $\m B$ is a category with (chosen) coproducts $\sqcup$, $(\m C_s, \sqcup)$ is a \emph{symmetric monoidal category} with $S: \mC_s \to \mB$ a strict monoidal functor respecting the units 
$\emptyset$, and $T: \mC_t \to \mB$ just a functor. 

Two triples $(V, X, h), (V', X, h')$, where $V, V'  \in ob(\m C_s), X \in ob(\mC_t)$, $h \in hom_{\m B}(S(V), T(X))$ and $h' \in hom_{\m B}(S(V'), T(X))$, are called \emph{isomorphic}, if there exists an isomorphism $\phi:V \xrightarrow {\cong} V' \in hom_{\mC_s}(V, V')$ such that the following diagram commutes, just like as above:
$$\xymatrix{S(V) \ar[dr]_ {h}\ar[rr]^ {S(\phi)} && S(V') \ar[dl]^{h'}\\& T(X).}$$

Then the isomorphism classes $[(V, X, h)]$ of triples $(V, X, h)$ can be turned into a monoid $\m M(\mC_s \xrightarrow S \mB /T(X))$ by
$$[(V, X,h)] \sqcup [(V', X,h')] := [(V \sqcup V, X, h + h')],$$
with unit $0=[(\emptyset, X,h)]$.
The associated Grothendieck group is denoted by $K(\mC_s \xrightarrow S \mB /T(X))$ (respectively
$K(\mC_s \xrightarrow S \mB /X)$ in case $\mC_t =B$ and $T=\op{id}_{\mC_t}$)
and called the \emph{canonical generalized $(S,T)$-relative Grothendieck group}. It is a covariant functor with respect to $X$ by composing $h$ with $T(f)$ for $f \in hom_{\mC_t}(X, X')$. 

\begin{rem}\label{dist}
In the following two cases, a \emph{distinguished element $\delta_X \in K(\mC_s \xrightarrow S \mB /T(X))$} is available:
\begin{enumerate}
\item[(i)] $\delta_{S(V)} := [(V, S(V), \op{id}_{S(V)})]$ for $X = S(V)$ in the case when $T= \op{id}_{\mC_t} :\mC_t \to \mC_t =\mB$ is the identity functor.
\item[(ii)] $\delta_V:= [(V, V, \op{id}_{S(V)})]$ for $X =V$ in the case when $\mC_t = \mC_s$ and $T = S$.\\
\end{enumerate}
\end{rem}

When it comes to a categorification of an additive invariant $\alp$ with values in an additive covariant functor $\mH: \mB \to \mathcal {AB}$, we sometimes need to consider a stronger notion of an isomorphism of triples $(V, X, h)$ (e.g. in the context with $V$ an ``oriented space"): Let $(V, X, h), (V', X, h')$ be two triples as above. Then they are called \emph{$\alp$-isomorphic} if the following holds: they are isomorphic as above by an isomorphism 
$\phi:V \xrightarrow {\cong} V' \in hom_{\mC_s}(V, V')$ such that
$$(S(\phi))_*\alp(V) = \alp(V').$$

If this last equality holds for any isomorphism $\phi:V \xrightarrow {\cong} V' \in hom_{\mC_s}(V, V')$, then $\alp$ is also called an \emph{isomorphism invariant}.
So in this case there is no difference between $\alp$-isomorphism classes and
isomorphism classes for such triples $(V, X, h)$.
The $\alp$-isomorphism class of $(V, X, h)$ is denoted by $[(V, X, h)]_{\alp}$ and 
these can be turned as before into a monoid $\m M_{\alp}(\mC_s \xrightarrow S \mB /T(X))$,
since $\alp$ is additive.
The associated generalized $(S,T)$-relative Grothendieck group is denoted by $K_{\alp}(\mC_s \xrightarrow S \mB /T(X))$, respectively
$K_{\alp}(\mC_s \xrightarrow S \mB /X)$ in case $\mC_t =B$ and $T=\op{id}_{\mC_t}$.
 If it is clear that we consider this group $K_{\alp}(\mC_s \xrightarrow S \mB /T(X))$ from the context, we sometimes omit the suffix $\alp$ from the notation, e.g. if $\alp$ is  an isomorphism invariant.
Similarly, if we consider  $\m B=\mathcal {TOP}_{lc}, \mC^{\infty}$ or $\mathcal V_k$ only as categories with respect to proper morphisms,
then this is indicated by the notation $K^{prop}_{\alp}(\mC_s \xrightarrow S \mB /T(X)),
K^{prop}_{\alp}(\mC_s \xrightarrow S \mB /X)$ etc.
 
\begin{thm}(A ``categorification" of an additive invariant) \label{cat}
Let $\mH: \mB \to \mathcal {AB}$ be an additive functor on $\mB$  with $T' = \mH \circ T$. Then an additive invariant $\alp$ on $ob(\mC_s)$ with values in $\mH$ induces a natural transformation on $\mC_t$:
$$\tau_{\alp}:K_{\alp}\bigl(\mC_s \xrightarrow S \mB/T(-) \bigr) \to T'(-); \, \tau_{\alp}([(V, X, h)]_{\alp}) := h_*(\alp(V)).$$
Assume $T$ is full, and consider one of the cases (i) or (ii) as in Remark \ref{dist}. Then $\tau_{\alp}$ is the unique natural transformation satisfying
$$\tau_{\alp}([(V, S(V), \op {id}_{S(V)})]_{\alp}) = \alp(V) \,\, \, \text {or} \, \, \, \tau_{\alp}([(V, V, \op {id}_{S(V)})]_{\alp}) = \alp(V).$$
\end{thm}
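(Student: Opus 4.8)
The plan is to verify, in order: (a) that the formula $\tau_{\alp}([(V,X,h)]_{\alp}) := h_*(\alp(V))$ is well-defined on $\alp$-isomorphism classes, (b) that it descends to the monoid and then to its group completion $K_{\alp}(\mC_s \xrightarrow S \mB/T(X))$, (c) that it is natural in $X$ with respect to $\mC_t$, and finally (d) the uniqueness under the fullness hypothesis. For (a), suppose $(V,X,h)$ and $(V',X,h')$ are $\alp$-isomorphic via $\phi\colon V\xrightarrow{\cong}V'$, so that $h' \circ S(\phi) = h$ and $(S(\phi))_*\alp(V) = \alp(V')$. Then $h_*(\alp(V)) = h'_*\bigl((S(\phi))_*\alp(V)\bigr) = h'_*(\alp(V'))$, using functoriality of $\mH$ on $\mB$; this is exactly the reason the \emph{stronger} $\alp$-isomorphism relation (rather than mere isomorphism) is imposed. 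For (b), the monoid operation sends $[(V,X,h)]_{\alp}\sqcup[(V',X,h')]_{\alp}$ to $[(V\sqcup V',X,h+h')]_{\alp}$, and additivity of $\alp$ together with additivity of $\mH$ gives $(h+h')_*(\alp(V\sqcup V')) = (h+h')_*\bigl((i_{S(V)})_*\alp(V) + (i_{S(V')})_*\alp(V')\bigr) = h_*(\alp(V)) + h'_*(\alp(V'))$, since $(h+h')\circ i_{S(V)} = h$ and similarly for $V'$. Hence $\tau_{\alp}$ is a monoid homomorphism into the group $T'(X)$, and the universal property of group completion extends it uniquely to a homomorphism $K_{\alp}(\mC_s \xrightarrow S \mB/T(X)) \to T'(X)$.

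For (c), naturality: given $f\in hom_{\mC_t}(X,X')$, the induced map on Grothendieck groups sends $[(V,X,h)]_{\alp}$ to $[(V,X',T(f)\circ h)]_{\alp}$, and we must check $\bigl(T(f)\circ h\bigr)_*(\alp(V)) = T'(f)\bigl(h_*(\alp(V))\bigr)$; but $T'(f) = \mH(T(f))$ by definition, so this is just functoriality of $\mH$ applied to the composite $T(f)\circ h$ in $\mB$. This establishes that $\tau_{\alp}$ is a natural transformation $K_{\alp}(\mC_s \xrightarrow S \mB/T(-)) \to T'(-)$ of functors on $\mC_t$. The normalization is immediate: in case (i), $\tau_{\alp}(\delta_{S(V)}) = \tau_{\alp}([(V,S(V),\op{id}_{S(V)})]_{\alp}) = (\op{id}_{S(V)})_*(\alp(V)) = \alp(V)$, and case (ii) is identical with $X=V$, $T=S$.

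The main obstacle — and the only part that genuinely uses fullness of $T$ — is uniqueness (d). Suppose $\tau'$ is another natural transformation with the same normalization. Given an arbitrary class, it suffices by additivity to treat a generator $[(V,X,h)]_{\alp}$ with $h\in hom_{\mB}(S(V),T(X))$. The point is to factor this generator through the distinguished element. In case (i) (so $\mC_t=\mB$, $T=\op{id}$), we have $h\in hom_{\mB}(S(V),X)$, and the morphism $h$ itself induces, on $K_{\alp}(\mC_s\xrightarrow S \mB/-)$, a map sending $\delta_{S(V)} = [(V,S(V),\op{id}_{S(V)})]_{\alp}$ to $[(V,X,h)]_{\alp}$; naturality of $\tau'$ with respect to $h\colon S(V)\to X$ then forces $\tau'([(V,X,h)]_{\alp}) = h_*\bigl(\tau'(\delta_{S(V)})\bigr) = h_*(\alp(V)) = \tau_{\alp}([(V,X,h)]_{\alp})$. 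In case (ii) (with $\mC_t=\mC_s$, $T=S$), the given $h$ lies in $hom_{\mB}(S(V),S(X))$; here fullness of $T=S$ is needed to realize $h$ as $S(g)$ for some $g\in hom_{\mC_s}(V,X)$, so that the morphism $g$ in $\mC_t=\mC_s$ induces on the Grothendieck groups the assignment $\delta_V = [(V,V,\op{id}_{S(V)})]_{\alp} \mapsto [(V,X,S(g))]_{\alp} = [(V,X,h)]_{\alp}$, and naturality of $\tau'$ with respect to $g$ again pins down $\tau'([(V,X,h)]_{\alp}) = S(g)_*(\alp(V)) = h_*(\alp(V))$. In both cases every generator is hit from a distinguished element by a morphism of $\mC_t$, so $\tau'=\tau_{\alp}$. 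The subtlety to watch is that in case (ii) without fullness there may be morphisms $h$ in $\mB$ not coming from $\mC_s$, and then the distinguished elements do not generate enough of the functor to force uniqueness — which is precisely why the hypothesis appears.
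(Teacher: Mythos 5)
Your proposal is correct and follows essentially the same route as the paper's own proof: well-definedness from $\alp$-isomorphism invariance, the additivity computation identifying $\tau_{\alp}$ as a (monoid, hence group) homomorphism, naturality by functoriality of $\mH$, and uniqueness by lifting $h$ through the full functor $T$ to a morphism in $\mC_t$ that sends the distinguished element to $[(V,X,h)]_{\alp}$. You merely spell out the well-definedness and naturality checks more explicitly than the paper (which treats them as immediate), and you rightly note that fullness is vacuous in case (i) since $T=\op{id}_{\mB}$ there.
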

Let us illustrate this result by some examples.

\begin{ex} 
Let $\mTOP$ be the category of topological spaces and let $\mC^{\infty}_{co}$ be the category of closed oriented $C^{\infty}$-manifolds, whose morphisms are just differentiable maps.
Consider the following cospan of these categories:
$$\mC^{\infty}_{co} \xrightarrow {\frak f} \mTOP \xleftarrow {id_{\mTOP}} \mTOP,$$
where $\frak f: \mC^{\infty}_{co} \to \mTOP$ is the forgetful functor. 
Then the fundamental class $[M]\in H_*(M)$ for $M\in ob(\mC^{\infty}_{co})$ is an
additive invariant with values in the usual homology $H_*$.
More generally, let $c\ell(E)\in H^*(-;R)$ be a contravariant functorial characteristic class of (isomorphism classes of) oriented vector bundles $E$. Then also $$\alp(M):=c\ell(TM)\cap [M]\in H_*(M;R)$$ is  additive,
where we recover the fundamental class for $c\ell$ the unit $1\in H^*(-)$.
Assume $c\ell$ is normalized in the sense that $1=c\ell^0(E)\in H^0(-;R)$, so that
a diffeomorphism $\phi: M \xrightarrow {\cong} M'$ of such oriented manifolds
is an $\alp$-isomorphism if and only if it is orientation preserving.
Then there exists a unique natural transformation
$$\tau_{c\ell}: K_{\alp}(\mC^{\infty}_{co} \xrightarrow {\frak f} \mTOP /-) \to H_*(-;R)$$
such that for a closed oriented $C^{\infty}$-manifold $M$
$$\tau_{c\ell}([(M, \frak f(M), id_{\frak f(M)})]_{\alp}) =c\ell(TM)\cap [M].$$
Let $\Omega^{SO}_*(X)$ be the oriented bordism group of a topological space $X$. Then we have
$$\Omega^{SO}_*(X) \cong K_{\alp}(\mC^{\infty}_{co} \xrightarrow {\frak f} \mTOP /X) / \sim,$$
with  $\sim$ the bordism relation. And $\tau_{c\ell}$ factorizes over $\Omega^{SO}_*(X)$
in the case when $c\ell$ is stable in the sense that $c\ell(E\oplus \bR)=c\ell(E)$ for $\bR$ the trivial
real line bundle.
\end{ex}

\begin{ex} 
Let $\m V_{\bC}$ be the category of complex algebraic varieties, with $\m C$ the full subcategory $\m V_{\bC}^{sm},\m V^{lpd}_{\bC}, \m V_{\bC}^{emb} $ or $\m V_{\bC}^{lci}$ of 
smooth,  locally pure dimensional, embeddable or local complete intersection varieties, with $$\alp(V):=c\ell_*(V)\in H_*(V)\otimes R$$ for $V\in ob(\m C)$ a corresponding additive characteristic homology class as in  Example \ref{singular}. 
Here the homology theory $H_*(X)$ is either Chow groups $CH_*(X)$ or the even-degree Borel--Moore homology groups $H^{BM}_{2*}(X)$.
Then $\alp$ is also an isomorphism invariant.
Consider the following cospan of categories, where we 
consider only proper morphisms:
$$\mC \xrightarrow {\frak f} \m V_{\bC}   \xleftarrow {id_{\m V_{\bC}}} \m V_{\bC},$$
with $\frak f: \mC \to \m V_{\bC}$ being the inclusion functor. 
Then there exists a unique natural transformation
$$\tau_{c\ell}: K^{prop}(\mC \xrightarrow {\frak f}\m V_{\bC} /-) \to H_*(-)\otimes R,$$
such that for all $V\in ob(\mC)$
$$\tau_{c\ell}([(V, \frak f(V), id_{\frak f(V)})]) =c\ell_*(V).$$
Moreover, there is a tautological surjective natural transformation
$$K^{prop}(\mC \xrightarrow {\frak f} \m V_{\bC} /-) \to K_0(\m V_{\bC}/-)$$
to the relative Grothendieck group $K_0(\m V_{\bC}/-)$ of complex algebraic varieties.
And for $\mC=\m V_{\bC}^{sm}$ and 
$$c\ell_*(V)=T_y(TV)\cap [V]=T_{y*}(V)
\quad \text{or} \quad c\ell_*(V)=c(TV)\cap [V]=c_*(V),$$
i.e., the Hirzebruch homology class or MacPherson's Chern class,
the transformations $\tau_{T_{y*}}$ and $\tau_{c_{*}}$ factorize over $K_0(\m V_{\bC}/-)$ by \cite{BSY}.
\end{ex}

In \S 2 we explain the general categorical background and prove Theorem \ref{cat}, whereas in 
\S\S3-4 we apply it to many different geometric situations, e.g., for obtaining Riemann--Roch type theorems.

\section{Generalized relative Grothendieck groups}

\begin{defn}
Let $\mC$ be a symmetric monoidal category equipped with monoidal structure $\oplus$ and the unit object $\emptyset$. The Grothedieck group $K(\mC)$ is defined to be the free abelian group generated by the isomophism classes $[X]$ of objects $X \in ob(\mC)$ modulo the relations
$$[X] + [Y] = [X \oplus Y], \quad \text{, with $0 = [\emptyset]$.}$$
\end{defn}

A functor $\Phi: \mC_1 \to \mC_2$ of two symmetric monoidal categories is a functor which preserves $\oplus$ in the relaxed sense that there are natural transformations:
$$\Phi(A) \oplus_{\mC_2} \Phi(B) \to \Phi(A \oplus_{\mC_1}B),$$
In some usage it requires an isomorphism
$$\Phi(A) \oplus_{\mC_2} \Phi(B) \cong \Phi(A \oplus_{\mC_1}B),$$
in which case it is  called a \emph{strong} monoidal functor. 
Here we also assume that $\Phi$ respects the units.

\begin{ex} 
Let $\m B$ be a category with a (chosen) \emph{coproduct} $\sqcup$ and 
$\m H : \mathcal B \to \mathcal {AB}$ be an additive covariant functor.
Then we have
$$\m H_*(X \sqcup Y) \cong \m H_*(X) \oplus \m H_*(Y) \quad 
\text{, with $\m H_*(\emptyset)=\{0\}$.}$$
 Examples of such an additive functor $\m H$ are a generalized homology theory $H_*$ for $\m B = \m {TOP}$, the Borel--Moore homology theory 
$H_*^{BM}$ for $\m B = \m {TOP}_{lc}$, the functor $G_0$ of Grothendieck groups of coherent sheaves or the  Chow groups $CH_*$ for $\m B = \m V_k$, as well as the functor $F$ of constructible functions or $\Omega$ of cobordism groups of self-dual constructible sheaf complexes for $\m B = \m V_{\bC}$.
\end{ex}

\begin{lem} 
Let $\Phi: \mC_1 \to \mC_2$ be a strong monoidal functor of two  symmetric monoidal categories.
 Then the map
$$\Phi_*:K(\mC_1) \to K(\mC_2), \quad \Phi_*([X]):= [\Phi(X)]$$
is a well-defined group homomorphism. Namely, the Grothendieck group $K$ is a covariant functor from the category of such categories and functors to the category of abelian groups.
\end{lem}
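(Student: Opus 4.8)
The plan is to verify that $\Phi_*$ is well-defined on the free abelian group $K(\mC_1)$ and respects the defining relation, then observe functoriality is immediate. First I would check that $\Phi_*$ is well-defined on generators: if $[X] = [X']$ in $K(\mC_1)$ because $X \cong X'$ in $\mC_1$, then applying $\Phi$ to the isomorphism gives $\Phi(X) \cong \Phi(X')$ in $\mC_2$, hence $[\Phi(X)] = [\Phi(X')]$; thus the assignment on isomorphism classes descends to a well-defined map from the free abelian group on isomorphism classes of $\ob(\mC_1)$ to $K(\mC_2)$.

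Next I would show this map kills the relation $[X] + [Y] - [X \oplus_{\mC_1} Y]$. Since $\Phi$ is strong monoidal, there is a natural isomorphism $\Phi(X) \oplus_{\mC_2} \Phi(Y) \cong \Phi(X \oplus_{\mC_1} Y)$, so in $K(\mC_2)$ we have
$$\Phi_*([X]) + \Phi_*([Y]) = [\Phi(X)] + [\Phi(Y)] = [\Phi(X) \oplus_{\mC_2} \Phi(Y)] = [\Phi(X \oplus_{\mC_1} Y)] = \Phi_*([X \oplus_{\mC_1} Y]).$$
Because $\Phi$ respects the units, $\Phi(\emptyset_{\mC_1}) \cong \emptyset_{\mC_2}$, so $\Phi_*([\emptyset_{\mC_1}]) = [\emptyset_{\mC_2}] = 0$, and the relation with $0 = [\emptyset]$ is preserved as well. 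Hence $\Phi_*$ factors through the quotient defining $K(\mC_1)$ and is a group homomorphism $K(\mC_1) \to K(\mC_2)$.

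Finally, functoriality: for strong monoidal functors $\Phi: \mC_1 \to \mC_2$ and $\Psi: \mC_2 \to \mC_3$, the composite $\Psi \circ \Phi$ is again strong monoidal, and on generators $(\Psi\circ\Phi)_*([X]) = [\Psi(\Phi(X))] = \Psi_*([\Phi(X)]) = \Psi_*(\Phi_*([X]))$, so $(\Psi\circ\Phi)_* = \Psi_* \circ \Phi_*$; moreover $(\op{id}_{\mC})_* = \op{id}_{K(\mC)}$. Thus $K$ is a covariant functor. I do not anticipate a genuine obstacle here; the only point requiring a little care is making sure the strong monoidal (isomorphism, not merely natural transformation) hypothesis is what is actually used when passing the coproduct relation through $\Phi$ — a lax monoidal functor would only give a map $\Phi(X)\oplus\Phi(Y)\to\Phi(X\oplus Y)$, which is insufficient to equate the classes in $K(\mC_2)$.
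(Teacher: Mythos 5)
Your proof is correct and is essentially the argument the paper leaves implicit: the lemma is stated in the paper without an explicit proof, and the natural verification is exactly what you wrote. Your closing remark — that the strong monoidal hypothesis (an isomorphism $\Phi(X)\oplus\Phi(Y)\cong\Phi(X\oplus Y)$, not merely a comparison map) is what actually makes the relation $[X]+[Y]=[X\oplus Y]$ descend — is the right point to flag, and it is the reason the paper restricts to strong monoidal functors here.
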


Now we recall the notion of \emph{comma category} and \emph{fiber category} (e.g., see \cite{Maclane}):
\begin{defn}Let $\mC_s$ (the suffix ``s" meaning \emph{source}), $\mC_t$ (the suffix ``t" meaning \emph{target}) and $\mB$ be categories, and let
$$\mC_s \xrightarrow {S}  \mB \xleftarrow {T} \mC_t$$
be functors among them (which shall be called \emph{a cospan of categories}). The \emph{comma category}, denoted by $(S \downarrow T)$, is formed by
\begin{itemize}
\item the objects of $(S \downarrow T)$ are triples $(V, X, h)$ with $V \in ob(\mC_s)$, $X \in ob(\mC_t)$ and $h \in hom_{\mB}(S(V), T(X))$,
\item the morphisms from $(V, X, h)$ to $(V', X', h')$ are pairs $(g_s,g_t)$ where  $g_s: V \to V'$  is a morphism in $\mC_s$ and  $g_t: X \to X'$  is a morphism in $\mC_t$ such that the following diagram commutes in the category $\mB$:
$$\CD
S(V)   @> S(g_s)>> S(V')  \\
@V h VV @VV h' V\\
T(X) @>>  T(g_t) > T(X'). \endCD
$$ 
\end{itemize}
\end{defn}

\begin{defn} Let $\mC_s \xrightarrow {S}  \mB \xleftarrow {T} \mC_t$ be a cospan and let $(S \downarrow T)$ be the above comma category associated to the cospan. We define the canonical \emph{projection functors} as follows:
\begin{enumerate}

\item $\pi_t: (S \downarrow T) \to \mC_t$ is defined by
\begin{itemize}
\item for an object $(V, X, h)$, $\pi_t((V, X, h)) := X,$
\item for a morphism $(g_s, g_t): (V, X, h) \to (V', X', h')$, $\pi_t((g_s, g_t)) := g_t.$
\end{itemize}

\item $\pi_s: (S \downarrow T) \to \mC_s$ is defined by
\begin{itemize}
\item for an object $(V, X, h)$, $\pi_s((V, X, h)) := V,$
\item for a morphism $(g_s, g_t): (V, X, h) \to (V', X', h')$, $\pi_s((g_s, g_t)) := g_s.$
\end{itemize}
\end{enumerate}
\end{defn}
Namely \emph{a cospan of categories}\,\, $\mC_s \xrightarrow {S}  \mB \xleftarrow {T} \mC_t$ induces \emph{a span of categories} \, \, $\mC_s \xleftarrow {\pi_s}  (S \downarrow T)  \xrightarrow {\pi_t} \mC_t.$

\begin{defn} Let $F: \mC \to \m{D}$ be a functor of two categories. Then, for an object $B \in ob(\m{D})$ \emph{the fiber category} of $F$ over $B$, denoted by $F^{-1}(B)$,  is defined to be the category consisting of
\begin{itemize}
\item the objects $X \in ob(\mC)$ such that $F(X) = B$,
\item for such objects $X, X'$, morphisms $f:X \to X'$ such that $F(f) = id_B$.
\end{itemize}
In  other words, more precisely, this category should be denoted by $F^{-1}(B, id_B)$.
\end{defn}

\begin{ex} As above, let us consider a cospan of categories and its associated span of categories:
$$\mC_s \xrightarrow {S}  \mB \xleftarrow {T} \mC_t, \quad \quad \mC_s \xleftarrow {\pi_s}  (S \downarrow T)  \xrightarrow {\pi_t} \mC_t.$$
\begin{enumerate}
\item For an object $X \in \mC_t$, the fiber category $\pi_t^{-1}(X)$ is nothing but the \emph{S-over category} $(S \downarrow T(X))$, whose objects are \emph{objects $S$-over $T(X)$}, i.e., the triple $(V, X, h)$, and for two triples $(V, X, h)$ and $(V', X, h')$ a morphism from $(V, X, h)$ to $(V', X, h')$ is $g_s \in hom_{\mC_s}(V, V')$ such that the following triangle commutes:
$$\xymatrix{S(V) \ar[dr]_ {h}\ar[rr]^ {S(g_s)} && S(V') \ar[dl]^{h'}\\& T(X).}$$

\item Furthermore, if $\mC_s = \mB$ and $S = id_S$ is the identity functor, then the above \emph{S-over category} $(S \downarrow X)$ is the standard \emph{over category} $(\mB \downarrow X)$, whose objects are objects over $X$, i.e., morphisms $h:V \to X$, and for two tmorphisms $h:V \to X$ and $h:V' \to X$  a morphism from $h:V \to X$ to  $h:V' \to X$ is $g \in hom_{\mB}(V, V')$ such that the following triangle commutes:
$$\xymatrix{V \ar[dr]_ {h}\ar[rr]^ {g} && V' \ar[dl]^{h'}\\& X.}$$
\end{enumerate}
\end{ex}

\begin{rem} For an object $V \in \mC_s$, the fiber category $\pi_s^{-1}(S)$ is nothing but the \emph{T-under category} $(S(V) \downarrow T)$, whose objects are \emph{objects $T$-under $S(V)$}, i.e., the triple $(V, X, h)$, and for two triples $(V, X, h)$ and $(V, X', h')$ a morphism from $(V, X, h)$ to $(V, X', h')$ is $g_t \in hom_{\mC_t}(X, X')$ such that the following triangle commutes:
$$\xymatrix{&S(V) \ar[dl]_ {h}\ar[dr]^ {h'}&\\
T(X) \ar[rr]_{T(g_t)} && T(X').}$$

Similarly, we can think of the \emph{T-under category} $(V \downarrow T)$ and the \emph{under category} $(V \downarrow \mB)$, but we do not write them down here, since we do not use them below in the rest of the paper.
\end{rem}

\begin{pro} Let $\mC_s \xrightarrow {S}  \mB \xleftarrow {T} \mC_t$ be a cospan of categories. Then a morphism $f \in hom_{\mC_t} (X_1, X_2)$ gives rise to the functor between the corresponding fiber categories:
$$T(f)_*: \pi_t^{-1}(X_1)  \to \pi_t^{-1}(X_2) ,$$
which is defined by
\begin{enumerate}
\item For an object $(V, X_1, h)$, $T(f)_*((V, X_1, h)) := (V, X_2, T(f) \circ h).$
\item For a morphism $(g_s, id_{X_1}): (V, X_1, h) \to (V', X_1, h')$ with $g_s \in hom_{\mC_s}(V, V')$, 
$$T(f)_*((g_s, id_{X_1})) := (g_s, id_{X_2}): (V, X_2, T(f) \circ h) \to (V', X_2, T(f) \circ h').$$
$$\xymatrix{S(V) \ar[dr]^ {h}\ar[rr]^ {g}\ar[dddr]_{T(f) \circ h} && S(V') \ar[dl]_{h'} \ar [dddl]^{T(f) \circ h'} \\
& T(X_1) \ar[dd]^(.25){T(f)}\\
&\\
&T(X_2)}$$
\end{enumerate}
\end{pro}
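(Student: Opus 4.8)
The statement to prove is that a morphism $f \in hom_{\mC_t}(X_1, X_2)$ induces a well-defined functor $T(f)_*: \pi_t^{-1}(X_1) \to \pi_t^{-1}(X_2)$ with the given definition on objects and morphisms. The plan is to verify the three functoriality axioms directly from the definitions of the comma category and its fiber categories, using that $S$ and $T$ are functors. First I would check that $T(f)_*$ sends objects of $\pi_t^{-1}(X_1)$ to objects of $\pi_t^{-1}(X_2)$: given $(V, X_1, h)$ with $h \in hom_{\mB}(S(V), T(X_1))$, the composite $T(f) \circ h$ lies in $hom_{\mB}(S(V), T(X_2))$ since $T(f) \in hom_{\mB}(T(X_1), T(X_2))$, so $(V, X_2, T(f) \circ h)$ is a legitimate object of the comma category, and it lies in the fiber over $X_2$ by construction.

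Next I would verify that $T(f)_*$ is well-defined on morphisms. A morphism in $\pi_t^{-1}(X_1)$ from $(V, X_1, h)$ to $(V', X_1, h')$ is, by definition of the fiber category, a comma-category morphism $(g_s, g_t)$ with $g_t = id_{X_1}$; the commutativity condition in $\mB$ reads $h' \circ S(g_s) = T(id_{X_1}) \circ h = h$. Applying $T(f)_*$ should yield $(g_s, id_{X_2})$ as a morphism $(V, X_2, T(f)\circ h) \to (V', X_2, T(f)\circ h')$, so I must check the corresponding commuting square: $(T(f) \circ h') \circ S(g_s) = T(f) \circ (h' \circ S(g_s)) = T(f) \circ h$, using associativity of composition and the already-verified relation $h' \circ S(g_s) = h$. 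This is exactly the outer triangle in the displayed diagram, and it also confirms that $T(id_{X_2}) = id_{X_2}$ makes $(g_s, id_{X_2})$ a fiber morphism over $X_2$.

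Finally I would confirm that $T(f)_*$ preserves identities and composition. For identities, $T(f)_*$ sends $(id_V, id_{X_1})$ on $(V, X_1, h)$ to $(id_V, id_{X_2})$ on $(V, X_2, T(f)\circ h)$, which is the identity there. For composition, given composable fiber morphisms $(g_s, id_{X_1})$ and $(g_s', id_{X_1})$, one has $(g_s', id_{X_1}) \circ (g_s, id_{X_1}) = (g_s' \circ g_s, id_{X_1})$ in the comma category (componentwise composition), and $T(f)_*$ of this is $(g_s' \circ g_s, id_{X_2}) = (g_s', id_{X_2}) \circ (g_s, id_{X_2})$, so functoriality holds. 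None of these steps presents a genuine obstacle; the only point requiring a little care — and the closest thing to a ``main'' step — is tracking that the fiber-category morphisms really are comma-category morphisms with $g_t$ an identity, so that the commuting square degenerates to the triangle $h' \circ S(g_s) = h$, which is what makes the verification $(T(f)\circ h')\circ S(g_s) = T(f) \circ h$ go through cleanly.
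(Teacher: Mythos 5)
Your proof is correct, and since the paper states this proposition without any written proof (it is one of the routine categorical verifications that the authors leave implicit), your verification is essentially the only sensible argument and matches what is intended. You correctly identify that the only point needing attention is that a fiber-category morphism over $X_1$ has target component $g_t = id_{X_1}$, so the comma-category square degenerates to $h' \circ S(g_s) = T(id_{X_1}) \circ h = h$ (using that $T$ preserves identities), and then postcomposing with $T(f)$ gives exactly the required commuting triangle; the identity and composition checks are immediate from componentwise structure.
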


\begin{lem}\label{relK} Let $\m B$ be a category with a \emph{coproduct} $\sqcup$,
with the coproduct of any two objects and an initial object chosen, and let
$(\m C_s, \sqcup)$ be a symmetric monoidal category.
 Let $\mC_s \xrightarrow {S}  \mB \xleftarrow {T} \mC_t$ be a cospan of categories with $S: (\mC_s , \sqcup)\to (\mB, \sqcup)$ a strict monoidal functor and $T:\mC_t \to \mB$ just a functor.  Then for each object $X \in ob(\mC_t)$, the fiber category $\pi_t^{-1}(X)$, i.e. the $S$-over category $(S \downarrow T(X))$ becomes also a symmetric monoidal category with
$$(V, X, h) \sqcup (V', X, h') := (V \sqcup V', X, h + h').$$
\end{lem}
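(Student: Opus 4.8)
The plan is to \emph{transport} the symmetric monoidal structure of $(\mC_s,\sqcup)$ to the fiber category $\pi_t^{-1}(X)=(S\downarrow T(X))$ along the restriction of the projection $\pi_s$, using strictness of $S$ to guarantee compatibility with the structure maps. The key observation that makes this work is that $\pi_s\colon (S\downarrow T(X))\to\mC_s$ is \emph{faithful}: a morphism $(V,X,h)\to(V',X,h')$ in $(S\downarrow T(X))$ is precisely a morphism $g\in hom_{\mC_s}(V,V')$ satisfying $h'\circ S(g)=h$, so its hom-sets are subsets of those of $\mC_s$. Hence any equation between parallel composites that holds in $\mC_s$ will hold in $(S\downarrow T(X))$, provided all the arrows occurring in it actually lie in the fiber; this is the only thing that ever needs checking.

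First I would make $\sqcup$ into a bifunctor on $(S\downarrow T(X))$. On objects it is prescribed; on a pair of morphisms $g\colon(V_1,X,h_1)\to(V_1',X,h_1')$, $g'\colon(V_2,X,h_2)\to(V_2',X,h_2')$ I would take $g\sqcup g'\colon V_1\sqcup V_2\to V_1'\sqcup V_2'$ from $\mC_s$. Using $S(V\sqcup V')=S(V)\sqcup S(V')$ and $S(g\sqcup g')=S(g)\sqcup S(g')$ (strictness), the morphism $h+h'\colon S(V)\sqcup S(V')\to T(X)$ is the copairing of $h$ and $h'$, and testing $(h_1'+h_2')\circ S(g\sqcup g')$ against the two coprojections shows, by uniqueness in the universal property of the coproduct in $\mB$, that it equals $h_1+h_2$; thus $g\sqcup g'$ is indeed a morphism of the product objects in the fiber. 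Bifunctoriality of $\sqcup$ (preservation of identities and composition) then holds in $\mC_s$ and hence, by faithfulness of $\pi_s$, in $(S\downarrow T(X))$. For the unit object I would take $(\emptyset,X,h_\emptyset)$ with $\emptyset$ the unit of $\mC_s$: since $S$ respects units, $S(\emptyset)$ is the chosen initial object of $\mB$, so $h_\emptyset$ is forced to be the unique morphism out of it (which also pins the unit object down up to unique isomorphism).

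Next I would obtain the associativity, unit and symmetry constraints of $(S\downarrow T(X))$ by taking the corresponding constraints $a,l,r,c$ of $\mC_s$ and checking that each underlies a morphism of the fiber. Since $S$ is strict monoidal it carries the constraints of $\mC_s$ onto those of $\mB$, so e.g. the condition on the associator, $\bigl(h_1+(h_2+h_3)\bigr)\circ S(a_{V_1,V_2,V_3})=(h_1+h_2)+h_3$, becomes the assertion that the associativity isomorphism for coproducts in $\mB$ intertwines the two bracketed copairings of $h_1,h_2,h_3$ into $T(X)$. The analogous assertions for the unitors — that $h_\emptyset+h=h\circ l_{S(V)}$ and $h+h_\emptyset=h\circ r_{S(V)}$, which use that the empty coproduct is initial — and for the symmetry, $h+h'=(h'+h)\circ c_{S(V),S(V')}$, are where the one genuinely non-formal (but completely standard) verification sits: these are the coherence properties of copairings for the coproduct $\sqcup$ in $\mB$ relative to a fixed target, and they follow directly from the universal property. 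Naturality of $a,l,r,c$ in the fiber is then obtained by applying the faithful $\pi_s$ to the relevant squares and using naturality in $\mC_s$.

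Finally, the pentagon, triangle and hexagon axioms are equalities between parallel composites of the arrows produced above, all of which lie in $(S\downarrow T(X))$; since $\pi_s$ is faithful, preserves composition, and these identities hold in $\mC_s$, they hold in the fiber, completing the proof that $\bigl(\pi_t^{-1}(X),\sqcup,(\emptyset,X,h_\emptyset)\bigr)$ is symmetric monoidal. In short, I expect the main (and essentially the only) obstacle to be the bookkeeping of the copairing/coherence identities in $\mB$ used in the previous paragraph; once those are recorded as a preliminary observation, the rest is a routine transport of structure along the faithful projection $\pi_s|_{\pi_t^{-1}(X)}$.
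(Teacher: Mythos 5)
Your proof is correct, and it fills in a gap the paper leaves open: the paper states Lemma \ref{relK} as one of its ``simple observations'' and gives no argument, so there is no paper proof to compare against. Your strategy — build $\sqcup$ on morphisms componentwise, use strictness of $S$ plus the universal property of the coproduct in $\m B$ to check that $g\sqcup g'$, the associator, the unitors and the symmetry all restrict to the fiber, and then transport the pentagon/triangle/hexagon axioms through the faithful projection $\pi_s\colon (S\downarrow T(X))\to \mC_s$ — is exactly the argument the authors are implicitly invoking, and the observation that $\pi_s$ is faithful on each fiber is the right one to make the coherence step painless. The one small verification worth recording explicitly, as you note, is the compatibility of the constraint morphisms with the copairings $h+h'\colon S(V)\sqcup S(V')\to T(X)$ (in particular $h\circ l_{S(V)}=h_\emptyset+h$ uses that $S(\emptyset)$ is the chosen initial object, which forces the composite along $i_{S(\emptyset)}$ to agree with $h_\emptyset$), and you handle this correctly via uniqueness in the coproduct's universal property.
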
 

\begin{cor} Let the situation be as above. A morphism $f \in hom_{\mC_t}(X_1, X_2)$ gives rise to the canonical group homomorphism
$$T(f)_* : K(\pi_t^{-1}(X_1) ) \to K(\pi_t^{-1}(X_2)),$$
and
$$K(\pi_t^{-1}(-)) : \mC_t \to \mAB$$
is a covariant functor from the category $\mC_t$ to the category of abelian groups.
\end{cor}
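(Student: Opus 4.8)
The plan is to deduce the statement from the Lemma above asserting that $K(-)$ is a functor on the category of symmetric monoidal categories and strong monoidal functors. The key point is therefore to upgrade the functor $T(f)_* : \pi_t^{-1}(X_1) \to \pi_t^{-1}(X_2)$ of the preceding Proposition to a (strict, hence strong) monoidal functor between the symmetric monoidal structures furnished by Lemma \ref{relK}.

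First I would check monoidality on objects. We have
$$T(f)_*\bigl((V, X_1, h) \sqcup (V', X_1, h')\bigr) = T(f)_*\bigl((V \sqcup V', X_1, h + h')\bigr) = (V \sqcup V', X_2, T(f)\circ(h+h')),$$
while
$$T(f)_*(V, X_1, h) \sqcup T(f)_*(V', X_1, h') = (V \sqcup V', X_2, (T(f)\circ h) + (T(f)\circ h')).$$
So monoidality amounts to the identity $T(f)\circ (h+h') = (T(f)\circ h) + (T(f)\circ h')$ in $hom_{\mB}(S(V)\sqcup S(V'), T(X_2))$, where we use $S(V\sqcup V') = S(V)\sqcup S(V')$ since $S$ is strict monoidal. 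This is immediate from the universal property of the chosen coproduct in $\mB$: by definition $h+h'$ is the unique morphism restricting to $h$ and $h'$ on the two summands, so post-composing with $T(f)$ gives the unique morphism restricting to $T(f)\circ h$ and $T(f)\circ h'$, i.e. $(T(f)\circ h)+(T(f)\circ h')$. For the unit, the zero object $(\emptyset, X_1, u)$ of $\pi_t^{-1}(X_1)$, with $u\colon S(\emptyset)=\emptyset \to T(X_1)$ the unique morphism out of the initial object of $\mB$, is sent to $(\emptyset, X_2, T(f)\circ u)$, and $T(f)\circ u$ is again the unique morphism $\emptyset \to T(X_2)$, i.e. the zero object of $\pi_t^{-1}(X_2)$. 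On morphisms the required compatibility is automatic, since the morphism part of $T(f)_*$ leaves $g_s$ untouched. Hence $T(f)_*$ is strict monoidal, and the Lemma above yields the well-defined group homomorphism $T(f)_* = K(T(f)_*)\colon K(\pi_t^{-1}(X_1)) \to K(\pi_t^{-1}(X_2))$, which is the first assertion.

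For functoriality I would observe that $X \mapsto \pi_t^{-1}(X)$, $f \mapsto T(f)_*$ is itself a functor from $\mC_t$ into the category of symmetric monoidal categories and strong monoidal functors: since $T$ is a functor, $T(\op{id}_X) = \op{id}_{T(X)}$, so $T(\op{id}_X)_*$ fixes every triple $(V, X, h)$ and every morphism $(g_s, \op{id}_X)$; and for $X_1 \xrightarrow{f} X_2 \xrightarrow{g} X_3$ one has $T(g\circ f) = T(g)\circ T(f)$, whence $T(g\circ f)_*(V, X_1, h) = (V, X_3, T(g)\circ T(f)\circ h) = T(g)_*\bigl(T(f)_*(V, X_1, h)\bigr)$, and likewise on morphisms. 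Composing this functor with the functor $K$ from the Lemma above then gives the covariant functor $K(\pi_t^{-1}(-))\colon \mC_t \to \mAB$. All the verifications are routine once the monoidal structures are in place; the only step with any real content — and the one I would be most careful about — is the monoidality of $T(f)_*$, i.e. the identity $T(f)\circ(h+h') = (T(f)\circ h)+(T(f)\circ h')$ together with its unit analogue, both of which reduce to the universal property of the chosen coproducts and initial object of $\mB$ and the strict monoidality of $S$.
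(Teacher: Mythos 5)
Your proof is correct and follows exactly the route the paper intends: the Corollary is meant to follow by combining the preceding Proposition (that $T(f)_*$ is a functor between fiber categories), Lemma \ref{relK} (that the fiber categories are symmetric monoidal), and the earlier Lemma (that $K$ is a functor on strong monoidal functors), and you correctly identify and verify the one nontrivial point — that $T(f)_*$ is strict monoidal, which reduces to $T(f)\circ(h+h') = (T(f)\circ h)+(T(f)\circ h')$ via the universal property of coproducts in $\mB$.
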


After these simple observations, we can introduce the following notions:

\begin{defn}[\emph{Generalized relative Grothendieck groups with respect to a cospan of categories}] $ $ \\ 
Let $\mC_s \xrightarrow {S}  \mB \xleftarrow {T} \mC_t$ be a cospan of categories
as in Lemma \ref{relK}.
\begin{enumerate}
\item The Grothendieck group of the fiber category of the projection functor $\pi_t: (S \downarrow T) \to \mC_t$ is denoted by
$$K(\mC_s \xrightarrow {S} \mB /T(X)): = K(\pi_t^{-1}(X)))$$
and called the \emph{generalized $(S,T)$-relative Grothendieck group of $X\in \mC_t$}.
\item If $\mC_t = \mB$ and $T = id_{\mB}$, then $K(\mC_s \xrightarrow {S} \mB /T(X))$ is simply denoted by $K(\mC_s \xrightarrow {S} \mB /X)$.
\item If $S = T= id_{\mC_s} :\mC_s \to \mC_s$ is the identity functor, then the above $id_{\mC}$-relative Grothendieck group $K(\mC_s \xrightarrow {id_{\mC_s}}\mC _s/X)$ is simply denoted by
$K(\mC_s/X)$
and called the \emph{relative Grothendieck group of $X$} or the \emph{relative Grothendieck group of $\m C_s$ over $X$}.
\end{enumerate}
All these relative Grothendieck groups are covariant functors from $\mC_t$ to $\mAB$.
\end{defn}

\begin{rem} If $T(X)=pt$ is a terminal object in the category $\m B$, then all the above relative Grothendieck groups are isomophic to the Grothendieck group $K(\mC_s)$ of the 
symmetric monoidal category $(\mC_s,\sqcup)$:
$$K(\mC_s \xrightarrow {S} \mB /T(X)=pt) \cong K(\mC_s \xrightarrow {S} \mB /pt) \cong K(\mC_s/pt) \cong K(\mC_s).$$
\end{rem}

\begin{pro}\label{comma} Let $\mC_s \xrightarrow {S}  \mB \xleftarrow {T} \mC_t$ be a cospan of categories as in Lemma \ref{relK}, 
$\Phi_b:\mB \to \mB'$ be a covariant functor preserving (chosen) coproducts and set $S':= \Phi_b \circ S$ and $T':=\Phi_b \circ T$. Then we get the canonical natural transformation from the functor $K(\mC_s \xrightarrow {S} \mB /T(-)): \mC_t \to \mAB$ to the functor $K(\mC_s \xrightarrow {S'} \mB' /T'(-)): \mC_t \to \mAB$:

$$\Phi_*: K(\mC_s \xrightarrow {S} \mB /T(-)) \to K(\mC_s \xrightarrow {S'} \mB' /T'(-)),$$
i.e., for a morphism $f \in hom_{\mC_t}(X_1, X_2)$ the following diagram commutes  in the category $\mAB$:
$$\CD
K(\mC_s \xrightarrow {S} \mB /T(X_1))@> \Phi_* >> K(\mC_s \xrightarrow {S'} \mB' /T'(X_1)) \\
@V T(f)_* VV @VV T'(f)_*V \\
K(\mC_s \xrightarrow {S} \mB /T(X_2)) @>> \Phi_*  >K(\mC_s \xrightarrow {S'} \mB' /T'(X_2)). \endCD
$$ 
Here $\Phi_*: K(\mC_s \xrightarrow {S} \mB /T(X)) \to K(\mC_s \xrightarrow {S'} \mB' /T'(X))$ is defined by
$$\Phi_*([(V, X, h)]:= [(V, X, \Phi_b(h))].$$
\end{pro}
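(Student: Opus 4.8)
The plan is to check first that the assignment $\Phi_*([(V,X,h)]) := [(V,X,\Phi_b(h))]$ is well-defined on isomorphism classes and compatible with the monoidal structure, so that it descends to a group homomorphism on each fiber Grothendieck group; and then to verify the naturality square. For well-definedness, suppose $(V,X,h)$ and $(V',X,h')$ are isomorphic via $\phi\colon V\xrightarrow{\cong} V'$ in $\mC_s$ with $h'\circ S(\phi)=h$ (the triangle over $T(X)$). Applying the functor $\Phi_b$ to this triangle and using $S'=\Phi_b\circ S$ gives $\Phi_b(h')\circ S'(\phi)=\Phi_b(h)$, so $(V,X,\Phi_b(h))$ and $(V',X,\Phi_b(h'))$ are isomorphic over $T'(X)$ by the \emph{same} isomorphism $\phi$; hence $\Phi_*$ respects isomorphism classes. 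For the monoidal structure, since $S$ is strict monoidal we identify $S(V\sqcup V')=S(V)\sqcup S(V')$, and the morphism $h+h'$ is the one induced on the coproduct by $h$ and $h'$; because $\Phi_b$ preserves the chosen coproducts, $\Phi_b$ applied to the coproduct data sends $h+h'$ to $\Phi_b(h)+\Phi_b(h')$ (with $S'$ again strict monoidal, as a composite of the strict monoidal $S$ with the coproduct-preserving $\Phi_b$ — here one should note $\Phi_b$ need only preserve chosen coproducts for the argument, matching the hypothesis). Thus $\Phi_*\bigl([(V,X,h)]\sqcup[(V',X,h')]\bigr)=\Phi_*([(V,X,h)])\sqcup\Phi_*([(V',X,h')])$, and $\Phi_*$ sends the unit $[(\emptyset,X,h)]$ to a unit. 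By the universal property of group completion (Lemma on strong monoidal functors, applied here essentially to the monoid maps on each fiber), $\Phi_*$ extends uniquely to a group homomorphism $K(\mC_s \xrightarrow{S}\mB/T(X))\to K(\mC_s\xrightarrow{S'}\mB'/T'(X))$.

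Next I would verify naturality in $X$. Fix $f\in hom_{\mC_t}(X_1,X_2)$. The two functors $T(f)_*$ and $T'(f)_*$ on fiber categories are described in the Proposition preceding Lemma~\ref{relK}: $T(f)_*((V,X_1,h))=(V,X_2,T(f)\circ h)$ and likewise for $T'$. So going around the square one way gives, on a generator, $[(V,X_1,h)]\mapsto[(V,X_1,\Phi_b(h))]\mapsto[(V,X_2,T'(f)\circ\Phi_b(h))]$, and the other way gives $[(V,X_1,h)]\mapsto[(V,X_2,T(f)\circ h)]\mapsto[(V,X_2,\Phi_b(T(f)\circ h))]$. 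These agree precisely because $\Phi_b$ is a functor, so $\Phi_b(T(f)\circ h)=\Phi_b(T(f))\circ\Phi_b(h)$, and $\Phi_b(T(f))=T'(f)$ by definition of $T'=\Phi_b\circ T$. Since both composites are group homomorphisms agreeing on generators of the free-abelian-group-before-relations, they agree on the Grothendieck group. This gives the commuting square in $\mAB$, i.e.\ $\Phi_*$ is a natural transformation of functors $\mC_t\to\mAB$.

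There is essentially no hard obstacle here — the statement is a diagram chase whose only content is the bookkeeping of strict monoidality and coproduct-preservation. The one point that deserves genuine care, and which I would treat as the ``main obstacle,'' is making precise what $h+h'\colon S(V\sqcup V')\to T(X)$ means and why $\Phi_b$ carries it to $\Phi_b(h)+\Phi_b(h')$: this rests on the compatibility of the strict monoidal structure of $S$ with the chosen coproduct in $\mB$ (so that $S(V)\sqcup S(V')$ really is $S(V\sqcup V')$ on the nose, and the coproduct injections match), together with the requirement that $\Phi_b$ preserve the \emph{chosen} coproducts (so that it preserves these injections, hence the induced map out of the coproduct). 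Once this is spelled out — ideally by reference to Lemma~\ref{relK}, where the monoidal structure on the fiber category was set up — everything else is formal. I would therefore organize the write-up as: (1) well-definedness on isomorphism classes; (2) additivity and unit, invoking Lemma~\ref{relK} and the group-completion lemma; (3) the naturality square by functoriality of $\Phi_b$; concluding that $\Phi_*$ is the asserted natural transformation.
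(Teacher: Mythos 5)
Your proof is correct and follows essentially the same route as the paper: the core of the argument is the identical diagram chase for the naturality square, using $T'(f)=\Phi_b(T(f))$ and the functoriality of $\Phi_b$. You additionally spell out the well-definedness of $\Phi_*$ on isomorphism classes and its compatibility with $\sqcup$; the paper takes these for granted (citing only that it suffices to check the square) since they follow directly from Lemma~\ref{relK} and the earlier lemma on strong monoidal functors.
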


\begin{proof} It suffices to show the commutativity $f_* \Phi_* = \Phi_* f_*$ in the square diagram above. First we observe the following morphisms:
$$\xymatrix{\hspace{4cm} S'(V) = \Phi_b(S(V)) \ar[d]^ {\Phi_b(h)} \hspace{2cm}\\
\hspace{4.2cm}  T'(X_1) = \Phi_b(T(X_1)) \ar[d]^{T'(f)= \Phi_b(T(f))} \hspace{2cm}\\
\hspace{2.2cm} T'(X_2) =\Phi_b(T(X_2))} \hspace{2cm}$$

Then we get for $[(V, X_1, h)] \in K(\mC_s \xrightarrow {S} \mB /T(X_1))$:
\begin{align*}
T'(f)_*\Phi_*([(V, X_1, h)]) & = T'(f)_*([(V, X_1, \Phi_b(h))] \\
 & = [(V, X_2, T'(f)\circ \Phi_b(h))] \\
& = [(V, X_2, \Phi_b(T(f) )\circ \Phi_b(h))] \\
& = [(V, X_2, \Phi_b(T(f) \circ h))] \\
& = \Phi_* ([(V, X_2, T(f) \circ h)]) \\
& = \Phi_* T(f)_* ([(V, X_1,h)]). \\
\end{align*}
\end{proof}

As explained in the introduction, when it comes to a categorification of an addtitive invariant $\alp$ with values in an additive covariant functor $\m H$, we need to consider \emph{$\alp$-isomorphism class $[(V,X,h)]_{\alp}$}. Then all the previous results hold even if we replace $[(V,X,h)]$ and $K(\mC_s \xrightarrow {S} \mB /T(-))$ with $[(V,X,h)]_{\alp}$ and $K/{\alp}(\mC_s \xrightarrow {S} \mB /T(-))$, respectively. And we get the following result:
 
\begin{thm}(A ``categorification" of an additive invariant) \label{main}
Let $\m B$ be a category with a \emph{coproduct} $\sqcup$,
with the coproduct of any two objects and an initial object chosen, and let
$(\m C_s, \sqcup)$ be a symmetric monoidal category. Consider a cospan of categories
 $\mC_s \xrightarrow {S}  \mB \xleftarrow {T} \mC_t$,  with $S: (\mC_s , \sqcup)\to (\mB, \sqcup)$ a strict monoidal functor and $T:\mC_t \to \mB$ just a functor.
Let $\mH: \mB \to \mathcal {AB}$ be an additive functor on $\mB$  with $T' = \mH \circ T$. Then an additive invariant $\alp$ on $Obj(\mC_s)$ with values in $\mH$ induces a natural transformation on $\mC_t$:
$$\tau_{\alp}:K_{\alp}(\mC_s \xrightarrow S \mB/T(-)) \to T'(-); \, \tau_{\alp}([(V, X, h)]_{\alp}) := h_*(\alp(V)).$$
Assume $T$ is full, and consider one of the cases (i) or (ii) as in Remark \ref{dist}. Then
$\tau_{\alp}$ is the unique natural transformation satisfying
$$\tau_{\alp}([V, S(V), \op {id}_{S(V)}]_{\alp}) = \alp(V) \,\, \, \text {or} \, \, \, \tau_{\alp}([V, V, \op {id}_{S(V)}]_{\alp}) = \alp(V).$$
\end{thm}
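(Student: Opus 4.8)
The plan is to verify in turn that $\tau_{\alp}$ is well-defined on $\alp$-isomorphism classes, that it respects the monoid relation and hence descends to the Grothendieck group $K_{\alp}$, that it is natural in $X\in\mC_t$, and finally that under the extra hypotheses (fullness of $T$, plus case (i) or (ii)) it is the unique natural transformation with the stated normalization. First I would check well-definedness: given two $\alp$-isomorphic triples $(V,X,h)$ and $(V',X,h')$, there is an isomorphism $\phi:V\xrightarrow{\cong}V'$ in $\mC_s$ with $h'\circ S(\phi)=h$ and $(S(\phi))_*\alp(V)=\alp(V')$. Then $h'_*(\alp(V'))=h'_*(S(\phi))_*(\alp(V))=(h'\circ S(\phi))_*(\alp(V))=h_*(\alp(V))$, using functoriality of $\mH$, so the value $h_*(\alp(V))\in\mH(T(X))=T'(X)$ depends only on $[(V,X,h)]_{\alp}$.

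Next, additivity: for $(V,X,h)\sqcup(V',X,h')=(V\sqcup V',X,h+h')$, the value is $(h+h')_*(\alp(V\sqcup V'))$. Since $\alp$ is additive, $\alp(V\sqcup V')=(i_{S(V)})_*\alp(V)+(i_{S(V')})_*\alp(V')$ under the canonical inclusions, and since $S$ is strict monoidal, $h+h'$ is the map $S(V)\sqcup S(V')\to T(X)$ induced by $h$ and $h'$; composing with the inclusions gives back $h$ and $h'$, so $(h+h')_*(\alp(V\sqcup V'))=h_*(\alp(V))+h'_*(\alp(V'))$. Also the unit $0=[(\emptyset,X,h)]_{\alp}$ maps to $h_*(\alp(\emptyset))=0$ since $\mH(\emptyset)=\{0\}$. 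Hence the map extends to a group homomorphism on the group completion $K_{\alp}(\mC_s\xrightarrow{S}\mB/T(X))\to T'(X)$. For naturality, given $f\in hom_{\mC_t}(X_1,X_2)$, the functor $T(f)_*$ sends $[(V,X_1,h)]_{\alp}$ to $[(V,X_2,T(f)\circ h)]_{\alp}$, and then $\tau_{\alp}$ of this is $(T(f)\circ h)_*(\alp(V))=T(f)_*\bigl(h_*(\alp(V))\bigr)=T'(f)\bigl(\tau_{\alp}([(V,X_1,h)]_{\alp})\bigr)$ by functoriality of $\mH$, so the naturality square commutes.

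For uniqueness, suppose $\sigma$ is any natural transformation with $\sigma(\delta_{S(V)})=\alp(V)$ (case (i)) or $\sigma(\delta_V)=\alp(V)$ (case (ii)). Consider an arbitrary generator $[(V,X,h)]_{\alp}$. In case (ii) (where $\mC_t=\mC_s$, $T=S$) the element $h\in hom_{\mB}(S(V),S(X))$ together with fullness of $T=S$ furnishes a morphism $\tilde h\in hom_{\mC_s}(V,X)$ with $S(\tilde h)=h$; then $(\id_V,\tilde h)$ is a morphism in the comma category from $(V,V,\id_{S(V)})$ to $(V,X,h)$, i.e. $S(\tilde h)_*[(V,V,\id_{S(V)})]_{\alp}=[(V,X,h)]_{\alp}$. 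Applying the naturality of $\sigma$ along $\tilde h$ gives $\sigma([(V,X,h)]_{\alp})=T'(\tilde h)\bigl(\sigma(\delta_V)\bigr)=T'(\tilde h)(\alp(V))=h_*(\alp(V))=\tau_{\alp}([(V,X,h)]_{\alp})$. Case (i) (where $T=\id_{\mC_t}$, $\mC_t=\mB$) is identical with $\tilde h=h$ viewed directly as a morphism $S(V)\to X$ in $\mC_t=\mB$, so no fullness beyond the identity is needed — though the statement phrases both uniformly. Since the generators $[(V,X,h)]_{\alp}$ span $K_{\alp}(\mC_s\xrightarrow{S}\mB/T(X))$, this forces $\sigma=\tau_{\alp}$.

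The main obstacle is the uniqueness part: one must be careful that the $\alp$-isomorphism relation, which is finer than plain isomorphism in the comma category, is still compatible with pushing forward along morphisms $\tilde h$ coming from fullness of $T$ — i.e. that $T(f)_*$ is well-defined on $\alp$-isomorphism classes (this is the content of the remark that ``all the previous results hold'' after replacing $[(V,X,h)]$ by $[(V,X,h)]_{\alp}$, since an $\alp$-isomorphism $\phi:V\cong V'$ over $T(X)$ remains an $\alp$-isomorphism over $T(X')$ after composing with $T(f)$, because $\alp(V),\alp(V')$ are unchanged). Once that compatibility is in hand, the rest is a routine diagram chase using only functoriality of $\mH$ and the strict monoidal property of $S$.
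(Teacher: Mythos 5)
Your proof is correct and follows essentially the same route as the paper: well-definedness via the $\alp$-isomorphism relation, additivity via strict monoidality of $S$ and additivity of $\alp$, and uniqueness via fullness of $T$ to lift $h$ to a morphism in $\mC_t$ carrying the distinguished element to $[(V,X,h)]_{\alp}$. You spell out the naturality check and the compatibility of the $\alp$-relation with pushforward, both of which the paper only states implicitly, but the core argument is identical (your $S(\tilde h)_*$ should read $\tilde h_*$, the map induced on the relative Grothendieck groups — a typo, not a gap).
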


\begin{proof} Note that $\tau_{\alp}$ is well-defined, since we consider $\alp$-isomorphism
classes so that it does not depend on the chosen representative of the class $[(V,X,h)]_{\alp}$. Then $\tau_{\alp}$ becomes a group homomorphism by the definition of an additive invariant and the functoriality of $\m H$:
\begin{align*}
\tau_{\alp}\Bigl([(V, X, h)]_{\alp} + [(V', X, h')]_{\alp}\Bigr) & = \tau_{\alp}([(V, X, h)]_{\alp} \sqcup  [(V', X, h')]_{\alp}) \\
& = \tau_{\alp}([(V \sqcup V', X, h + h')]_{\alp}) \\
& = (h+h')_*(\alp(V \sqcup V'))  \\
& = (h+h')_*\Bigl( \left(i_{\frak f(V)} \right)_*\alp(V) +  \left(i_{\frak f(V')} \right)_*\alp(V') \Bigr)\\
& = (h+h')_*(i_{\frak f(V)})_* (\alp(V)) +  (h+h')_*(i_{\frak f(V')})_*(\alp(V'))\\
& = h_*(\alp(V)) + h'_*(\alp(V')) \\
& = \tau_{\alp}([(V, X, h)]_{\alp})  + \tau_{\alp}([(V', X, h')]_{\alp}),
\end{align*}
since $(h+h') \circ i_{\frak f(V)} =h$ and $(h+h') \circ i_{\frak f(V')} =h'$.
Finally assume $T$ is full so that there is a morphism $f\in hom_{\mC_t}(S(V),X)$ in the case of (i), or
$f\in hom_{\mC_t}(V,X)$ in the case of (ii), with $T(f)=h: S(V)\to T(X)$. Then
$$f_*([(V, S(V), id_{S(V)})]_{\alp})=([(V, X, h)]_{\alp}) \quad \text{or} \quad
f_*([(V, V, id_{S(V)})]_{\alp})=([(V, X, h)]_{\alp})\:,$$
which implies the uniqueness statement.
\end{proof}

\section {A categorification of an additive homology class}
In the rest of the paper we deal with $\mB$ a category of spaces (with some possible extra structures), with the coproduct $\sqcup$ given by the usual disjoint union. 
Examples are  the category  $\mathcal {TOP}_{(lc)}$ of (locally compact) topological spaces, 
the category $\mC_{(co)}^{\infty}$ of (closed oriented) $\m C^{\infty}$-manifolds,
or the category $\mathcal V_k$ of algebraic varieties (i.e. reduced separated schemes of finite type) over a base field $k$.
And the additive covariant functor $\m H : \mathcal B \to \mathcal {AB}$ is most of the time a suitable homology theory,
like usual homology $H_*(-;R)$ with coefficients in some commutative ring $R$ or a generalized homomolgy $H_*$, in case $\mB= \mathcal {TOP}$. 
Moreover, for $\m B=\mathcal {TOP}_{lc}, \mC^{\infty}$ or $\mathcal V_k$, we often consider them as categories only with respect to proper morphisms,
with $\m H$ the (even degree) Borel-Moore homology $H^{BM}_{(2)*}(-;R)$  with coefficients in some commutative ring $R$ for $\m B=\mathcal {TOP}_{lc}, \mC^{\infty}$, or $\m H=CH_*$ (resp. $CH_*\otimes R$) the Chow groups (with coefficients in $R$) for $\m B=\mathcal V_k$.
Similarly, if we further restrict ourselfes 
to projective morphisms in the algebraic context, 
then $\m H : \mathcal V_k \to \mathcal {AB}$  could also be a suitable Borel--Moore functor in the sense of \cite{Levine-Morel}.\\

The corresponding generalized relative Grothendieck groups only with respect to  proper morphisms in $\mB$ 
(and $\alp$ a corresponding additive invariant) are then denoted by
$$K^{prop}_{(\alp)}(\mC_s \xrightarrow {S} \mB /T(-)), K^{prop}_{(\alp)}(\mC_s \xrightarrow {S} \mB /-) \quad \text{and} \quad K^{prop}_{(\alp)}(\mC_s /-).$$
Then one has a tautological group homomorphism (by just forgetting the properness condition)
$$ K^{prop}(\mC_s \xrightarrow {S} \mB /T(X)) \to K(\mC_s \xrightarrow {S} \mB /T(X)),$$
whose image is the subgroup of the generalized relative Grothendieck group $K(\mC_s \xrightarrow {S} \mB /T(X))$ generated by isomorphism classes
$[(V, X, h)]$
with $h:S(V) \to T(X)$ being a \emph {proper} map. 
If we assume that $S(V)$ is compact (resp. complete in the algebraic context) for every $V\in ob(\mC_s)$, then we have
$$K^{prop}(\mC_s \xrightarrow {S} \mB /T(X)) = K(\mC_s \xrightarrow {S} \mB /T(X)).$$

Now our base category $\m B$ of spaces is not only a symmetric monoidal category with respect to the disjoint union $\sqcup$ (with unit
the initial empty space $\emptyset$), but also with respect to the product of spaces $\times$ (with unit the terminal point space $\{pt\}$,
given by $Spec(k)$ in the algebraic context). Moreover, these structures are compatible in the sense that
$$(X\sqcup X')\times Y = (X\times  Y) \sqcup (X'\times Y) \quad \text{and} \quad Y\times (X\sqcup X') = (Y\times X) \sqcup (Y\times X'),$$
with $\emptyset \times Y =\emptyset = Y\times \emptyset$. Similarly the class of proper (or projective) morphisms in $\m B$ (in the algebraic context)
is stable under products $\times$. And we want to discuss the multiplicativity properties of our transformations
$$\tau_{\alp}: K^{(prop)}_{\alp}(\mC_s \xrightarrow {S} \m B /-) \to \m H$$
associated to an additive invariant on objects in $ob(\mC_s)$ with values in a suitable additive functor $\m H : \mathcal B \to \mathcal {AB}$ on the category $\mB$, which may be functorial only with respect to proper (or projective) morphisms. Here we consider for simplicity only the most important case
that $\mC_t =B$ and $T=\op{id}_{\mC_t}$.
Then it is easy see the following:

\begin{pro} \label{mult}
\begin{enumerate}
\item Assume that $\mC_s$ is also a symmetric monoidal category with respect to a product $\times$, such that $S: \mC=\mC_s \to \m B$ is  strict monoidal with respect to $\sqcup$ as well as $\times$ (e.g. $S$ is the inclusion of a subcategory stable under $\sqcup$ and $\times$).
Then $K^{(prop)}(\mC \xrightarrow {S} \m B /X)$  has a functorial bilinear cross product structure:
$$ \times : K^{(prop)}(\mC\xrightarrow {S} \mB /X)  \times K^{(prop)}(\mC \xrightarrow {S} \mB /Y)\to K^{(prop)}(\mC \xrightarrow {S} \mB /X \times Y);$$
$$[(V, X, h)] \times [(W, Y, k)] := [(V \times W, X \times Y,  h \times k)],$$
with $\times \circ (f_*\times  g_*) = (f\times g)_*\circ \times$ for all (proper or projective) morphisms $f,g$ in $\mB$.
\item Assume that the additive functor $\m H : \mB \to \mathcal {AB}$ is endowed with a bilinear cross poduct
$$\boxtimes: \mH(X)\times \mH(Y) \to \mH(X\times Y)$$
such that $\boxtimes \circ (f_*\times  g_*) = (f\times g)_*\circ \boxtimes$ for all (proper or projective) morphisms $f,g$ in $\mB$.
Consider an additive invariant $\alp$ on objects in $ob(\mC)$ with values in  $\m H$, which is \emph{multiplicative} in the sense that
$$\alp(V \times V') = \alp(V)\boxtimes \alp(V')\quad \text{for all $V,V'\in ob(\mC)$}.$$
Then $K^{(prop)}_{\alp}(\mC \xrightarrow {S} \mB /-)$ also gets 
a functorial bilinear cross product structure in the same way as before
and the associated natural transformation $\tau_{\alp}: K^{(prop)}_{\alp}(\mC\xrightarrow {S} \mB /-) \to \mH$ 
given in Theorem \ref{main}
commutes with the cross product, i.e., the following diagram commutes:
$$\begin{CD}
K^{(prop)}_{\alp}(\mC \xrightarrow {S} \mB/X) \times  K^{(prop)}_{\alp}(\mC \xrightarrow {S} \mB /Y ) @> \times  >> 
K^{(prop)}_{\alp}(\mC \xrightarrow {S} \mB /X \times Y )  \\
@V \tau_{\alp} \times \tau_{\alp} VV  @VV  \tau_{\alp}  V \\
\mH(X) \times \mH(Y) @>> \boxtimes > \mH(X \times Y)  \:.
\end{CD} 
$$
\end{enumerate}
\end{pro}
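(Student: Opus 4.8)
The plan is to verify each of the two assertions in turn, since they are largely formal consequences of the definitions together with Proposition \ref{comma} and the multiplicativity hypotheses.

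For part (1), I would first check that the formula $[(V, X, h)] \times [(W, Y, k)] := [(V \times W, X \times Y, h \times k)]$ is well-defined on isomorphism classes: if $\phi: V \xrightarrow{\cong} V'$ is an isomorphism over $X$ and $\psi: W \xrightarrow{\cong} W'$ over $Y$, then $\phi \times \psi$ is an isomorphism in $\mC_s$ (using that $\mC_s$ is monoidal under $\times$), and strict monoidality of $S$ gives $S(\phi \times \psi) = S(\phi) \times S(\psi)$, so the relevant triangle over $T(X \times Y) = S(V \times V')$ commutes. Next I would check bilinearity: using the distributivity $(V_1 \sqcup V_2) \times W = (V_1 \times W) \sqcup (V_2 \times W)$ in $\mB$ (and the assumption that $S$ is strict monoidal for both $\sqcup$ and $\times$, so this holds already in $\mC_s$), the map $(h_1 + h_2) \times k$ decomposes correctly as $(h_1 \times k) + (h_2 \times k)$ after identifying the coproduct, which gives additivity in the first variable; the second is symmetric. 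This passes to the Grothendieck groups by the universal property. Functoriality $\times \circ (f_* \times g_*) = (f \times g)_* \circ \times$ is then the identity $(T(f) \times T(g)) \circ (h \times k) = (T(f) \circ h) \times (T(g) \circ k)$, which holds since $T = \op{id}$ here and composition distributes over $\times$ of morphisms in $\mB$.

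For part (2), the $\alp$-isomorphism version needs the extra check that the cross product descends to $\alp$-isomorphism classes: if $\phi: V \xrightarrow{\cong} V'$ satisfies $(S(\phi))_* \alp(V) = \alp(V')$ and likewise $\psi$ for $W, W'$, then using multiplicativity $\alp(V \times W) = \alp(V) \boxtimes \alp(W)$, strict monoidality $S(\phi \times \psi) = S(\phi) \times S(\psi)$, and the compatibility $\boxtimes \circ (f_* \times g_*) = (f \times g)_* \circ \boxtimes$, one computes
\[
(S(\phi \times \psi))_* \alp(V \times W) = (S(\phi) \times S(\psi))_* (\alp(V) \boxtimes \alp(W)) = (S(\phi))_*\alp(V) \boxtimes (S(\psi))_*\alp(W) = \alp(V') \boxtimes \alp(W') = \alp(V' \times W'),
\]
so $\phi \times \psi$ is an $\alp$-isomorphism. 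Hence $K^{(prop)}_{\alp}(\mC \xrightarrow{S} \mB /-)$ inherits the same functorial bilinear cross product. Finally, commutativity of the square is a direct computation on generators: starting from $([(V, X, h)]_{\alp}, [(W, Y, k)]_{\alp})$, going right then down gives $\tau_{\alp}([(V \times W, X \times Y, h \times k)]_{\alp}) = (h \times k)_*(\alp(V \times W)) = (h \times k)_*(\alp(V) \boxtimes \alp(W))$, while going down then right gives $h_*(\alp(V)) \boxtimes k_*(\alp(W))$; these agree by the compatibility $\boxtimes \circ (h_* \times k_*) = (h \times k)_* \circ \boxtimes$ together with multiplicativity of $\alp$. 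Bilinearity of both $\times$ and $\boxtimes$ then extends the identity from generators to all elements.

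I do not expect a serious obstacle here; the statement is essentially bookkeeping. The one point requiring a little care is the compatibility of the two monoidal structures $\sqcup$ and $\times$ at the level of $\mC_s$ (not just $\mB$): the distributivity isomorphisms must be available in $\mC_s$ for the bilinearity argument to take place before applying $S$, or else one applies $S$ first and argues in $\mB$ — either way one should state explicitly which identifications are being used, since "strict monoidal with respect to $\sqcup$ as well as $\times$" is being leaned on to keep these coherence isomorphisms invisible. Everything else is a routine diagram chase using the hypotheses verbatim.
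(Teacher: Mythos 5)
Your proposal is correct and follows essentially the same approach as the paper, whose written proof consists only of the final commutativity computation $\tau_{\alp}([(V,X,h)]_\alp\times[(W,Y,k)]_\alp)=(h\times k)_*(\alp(V)\boxtimes\alp(W))=h_*\alp(V)\boxtimes k_*\alp(W)$, treating the earlier parts (well-definedness, bilinearity, functoriality, descent to $\alp$-isomorphism classes) as evident. Your extra care about distributivity of $\times$ over $\sqcup$ in $\mC_s$ (needed for bilinearity before group completion) is a valid observation about a hypothesis the paper leaves implicit in the phrase ``strict monoidal with respect to $\sqcup$ as well as $\times$''.
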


\begin{proof} We only need to prove the commutativity of the last diagram, which follows from
\begin{align*}
\tau_{\alp} \left([(V, X, h)]_{\alp} \times [(W, Y, k)]_{\alp}\right) &=
\tau_{\alp} \left([(V \times W, X \times Y,  h \times k)]_{\alp}\right)\\
&= ( h \times k)_*\left(\alp(V \times W)\right)\\
&= ( h \times k)_*\left(\alp(V) \boxtimes \alp(W)\right)\\
&= h_*(\alp(V))\boxtimes k_*(\alp(W))\\
&= \tau_{\alp} \left([(V, X, h)]_{\alp}\right)  \boxtimes 
\tau_{\alp} \left([(W, Y, k)]_{\alp}\right).
\end{align*}
\end{proof}

Let us illustrate this result in some examples. First we consider the differential-topological context with $\m B = \mathcal {TOP}_{lc}$ the category of locally compact topological spaces, 
and $\mC$ the category $\mC_{(o)}^{\infty}$ or $\mC^{\infty}_{\bC}$ of all differentiable (oriented) or stable complex $\m C^{\infty}$-manifolds, with $S: \mC\to \mB$ the forget functor.
Of course $S$ commutes with $\sqcup$ and $\times$ for $\mC=\mC^{\infty}$ the category
of $\m C^{\infty}$-manifolds. Moreover, any such manifold $V$ has a fundamental class 
$[V]\in H_*^{BM}(V;\bZ_2)$ in Borel-Moore homology with $\bZ_2$-coefficients.
And this fundamental class is additive and multiplicative:
$$[V\sqcup V']= [V]+[V'] \quad \text{and} \quad [V\times V'].$$
When it comes to an oriented (or a stable complex) $\m C^{\infty}$-manifold $V$, then this has
a fundamental class $$[V]\in H_*^{BM}(V;\bZ)$$ in Borel-Moore homology with $\bZ$-coefficients.
And we view them as a category with the same morphisms as for the underlying 
$\m C^{\infty}$-manifolds, i.e. with $\m C^{\infty}$-maps between them (so that diffeomorphisms are the isomorphisms).
Then the disjoint union $V\sqcup V'$ or product $V\times V'$ of two oriented (or   stable complex) $\m C^{\infty}$-manifolds $V, V'$ can also be oriented (or given the structure of a
stable complex) $\m C^{\infty}$-manifold. And there is a natural choice for this so that
the fundamental class $[-]$ becomes  additive and multiplicative as before.
In this way we also get the symmetric monoidal structures $\sqcup$ and $\times$ on the category
of oriented (or a stable complex) $\m C^{\infty}$-manifolds, with the forget functor $S$ to
$\mathcal {TOP}_{lc}$ (or also to $\m C^{\infty}$) commuting with these structures.
Also note that 
$$s_*[V\times V']= (-1)^{dim(V)\cdot dim(V')} [V'\times V]
\in H^{BM}_{dim(V)+dim(V')}(V'\times V;\bZ)$$
for two \emph{connected} oriented manifolds $V,V'$ and $s: V\times V'\stackrel{\sim}{\to} V'\times V$ the symmetry isomorphism, since $H^{BM}_{*}(-;\bZ)$ is \emph{graded-commutative} with respect to the usual cross product $\boxtimes$.

\begin{cor}\label{smooth-gen}
Let $\mC$  be the category $\mC^{\infty}_{(o)}$ or $\mC^{\infty}_{\bC}$ of all (oriented) or stable complex smooth manifolds.
Consider a contravariant functorial characteristic class
$$c\ell(E) \in  H^*(-; \bZ_2) \quad \text{or} \quad c\ell(E) \in  H^{2*}(-;R)$$ 
of (isomorphism classes of) real (oriented) or complex vector bundles, which is
\emph{multiplicative and normalized}, i.e.: 
$$c\ell(E \oplus F) = c\ell(E) \cup c\ell(F)\quad \text{and} \quad 1=c\ell^0(E)\in H^0(-;R).$$ 
For a smooth
(oriented or stable complex) manifold $V$, let 
$$\alp(V) := c\ell(TV) \cap [V]\in H_*^{BM}(-;R).$$
Then the invariant $\alp$ is \emph{additive and multiplicative}. 
By Theorem \ref{main},
there exists a unique natural transformation
$$\tau_{c\ell}: K^{prop}_{\alp}(\mC \xrightarrow {S} \mTOP_{lc} /-) \to H^{BM}_*(-;R)$$
such that for a smooth (oriented or stable complex) manifold $V$
$$\tau_{c\ell}([(V, V, id_{V})]_{\alp}) = c\ell(TV) \cap [V].$$
And $\tau_{c\ell}$ is also multiplicative (by Proposition \ref{mult}), i.e.:
$$\tau_{c\ell}([(V, X, h)]_{\alp} \times [(W, Y, k)]_{\alp}) = \tau_{c\ell}([(V, X, h)]_{\alp}) \boxtimes \tau_{c\ell}([(W, Y, k)]_{\alp}).$$
\end{cor}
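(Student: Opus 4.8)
The plan is to assemble this corollary almost entirely from results already established, so the proof is really a matter of checking hypotheses and citing. First I would verify that $\alp(V):=c\ell(TV)\cap[V]$ is additive: since $c\ell$ is normalized with $c\ell^0=1$, a diffeomorphism $\phi$ of oriented (or stable complex) manifolds is an $\alp$-isomorphism precisely when it is orientation preserving (respecting the stable complex structure), because the top-degree behaviour of $c\ell(T-)\cap[-]$ is governed only by $[-]$. For a disjoint union $V\sqcup V'$, the tangent bundle restricts to $TV$ on $V$ and to $TV'$ on $V'$, so $c\ell(T(V\sqcup V'))\cap[V\sqcup V'] = (i_V)_*(c\ell(TV)\cap[V]) + (i_{V'})_*(c\ell(TV')\cap[V'])$ by additivity of $H_*^{BM}$ and naturality of cap product; this is exactly the defining equation of an additive invariant with values in the additive functor $\m H = H_*^{BM}(-;R)$. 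Hence Theorem \ref{main} applies, in the case (ii) of Remark \ref{dist} (here $\mC_t=\mC_s$, $T=S$ — or more simply $\mC_t=\mB=\mTOP_{lc}$ with $T=\op{id}$), giving the existence and uniqueness of $\tau_{c\ell}$ with $\tau_{c\ell}([(V,V,\op{id}_V)]_{\alp}) = c\ell(TV)\cap[V]$; note $S$ being the forgetful functor from manifolds to $\mTOP_{lc}$ is full onto continuous maps, so the fullness hypothesis of Theorem \ref{main} is met.

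Next I would establish multiplicativity of $\alp$ so as to invoke Proposition \ref{mult}(2). The needed input is that $H_*^{BM}(-;R)$ carries the usual bilinear homology cross product $\boxtimes$ commuting with proper pushforward, which is classical; that $\mC^{\infty}_{(o)}$ and $\mC^{\infty}_{\bC}$ are symmetric monoidal under $\times$ with $S$ strict monoidal for $\times$ (observed in the text preceding the corollary, via the natural orientation/stable-complex structure on a product); and that $\alp(V\times V') = \alp(V)\boxtimes\alp(V')$. This last identity follows from $T(V\times V')\cong \op{pr}_1^*TV\oplus \op{pr}_2^*TV'$, the multiplicativity $c\ell(E\oplus F)=c\ell(E)\cup c\ell(F)$ of the characteristic class, the compatibility of $\cup$ and $\cap$ with $\times$ and $\boxtimes$ (i.e. $(a\times b)\cap([V]\boxtimes[V']) = (a\cap[V])\boxtimes(b\cap[V'])$ up to the standard signs absorbed in the graded-commutativity already flagged in the text), and the fact that $[V\times V']=[V]\boxtimes[V']$ under the chosen orientation. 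With $\alp$ additive and multiplicative, Proposition \ref{mult}(1) endows $K^{prop}_{\alp}(\mC\xrightarrow{S}\mTOP_{lc}/-)$ with a functorial bilinear cross product, and Proposition \ref{mult}(2) yields the commuting square, whose bottom-left-to-right reading is exactly the displayed multiplicativity of $\tau_{c\ell}$.

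The genuinely delicate point — and the only place where real care is needed rather than citation — is the sign bookkeeping in the oriented/stable complex case: the cross product $\boxtimes$ on $H_*^{BM}(-;\bZ)$ is only graded-commutative, and the identification $T(V\times V')\cong\op{pr}_1^*TV\oplus\op{pr}_2^*TV'$ together with the product orientation must be chosen consistently so that $[V\times V']=[V]\boxtimes[V']$ on the nose (the text has already pinned down ``a natural choice for this so that the fundamental class becomes additive and multiplicative,'' so I would simply appeal to that choice and to the Whitney-sum / cap-product formula $c\ell(E\oplus F)\cap(z\boxtimes z')$). In the $\bZ_2$-coefficient case there are no signs and everything is automatic. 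I expect the write-up to be short: state that additivity and multiplicativity of $\alp$ hold for the reasons just indicated, then say the existence/uniqueness of $\tau_{c\ell}$ follows from Theorem \ref{main} and its multiplicativity from Proposition \ref{mult}, with the cross-product structure on the source coming from Proposition \ref{mult}(1). No lengthy computation is required.
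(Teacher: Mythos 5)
Your proposal is correct and follows essentially the same route as the paper: verify additivity of $\alp$ via functoriality of $c\ell$ and the projection formula (what you call naturality of cap product), verify multiplicativity via the Whitney-sum decomposition $T(V\times V')\cong \op{pr}_1^*TV\oplus\op{pr}_2^*TV'$ together with compatibility of $\cup$/$\cap$ with $\boxtimes$, then invoke Theorem \ref{main} for existence/uniqueness and Proposition \ref{mult} for the product formula. On the one point you flag as delicate, the paper dispatches the sign issue even more quickly than you do, observing that in the oriented/stable-complex case $c\ell$ lives in even cohomological degree, so the Koszul sign in $(a\boxtimes b)\cap(z\boxtimes z')=(a\cap z)\boxtimes(b\cap z')$ vanishes; your appeal to the ``natural choice'' of product orientation (which the text indeed fixes so that $[V\times V']=[V]\boxtimes[V']$) covers the remaining convention.
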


\begin{proof}
The invariant $\alp$ is additive by the functoriality of $c\ell$ and the projection formula
for the inclusions $i: V\to V\sqcup V'$ and $i': V'\to  V\sqcup V'$:
\begin{align*}
\alp(V\sqcup V')&= c\ell(T(V\sqcup V'))\cap [V\sqcup V']\\
&= c\ell(T(V\sqcup V'))\cap (i_*[V]+ i'_*[V'])\\
&=i_*\left(i^*c\ell(T(V\sqcup V'))\cap[V]\right) + 
i'_*\left(i'^*c\ell(T(V\sqcup V'))\cap[V']\right)\\
&= i_*\left(c\ell(TV)\cap[V]\right) + 
i'_*\left(c\ell(T V')\cap[V']\right)\\
&= i_*\alp(V)+ i'_*\alp(V').
\end{align*}
The invariant $\alp$ is multiplicative by the functoriality and multiplicativity of $c\ell$:
\begin{align*}
\alp(V\times  V')&= c\ell(T(V\times V'))\cap [V\times V']\\
&=\left( c\ell(TV)\boxtimes c\ell(TV')\right) \cap \left([V]\boxtimes [V']\right)\\
&=\left( c\ell(TV)\cap [V]\right) \boxtimes \left( c\ell(TV')\cap [V']\right)\\
&=\alp(V)\boxtimes \alp(V').
\end{align*}
Note that in the third equality there is no sign appearing, since we only consider
even degree characteristic classes $c\ell$ in the context of oriented or stable complex 
manifolds.
\end{proof}

\begin{rem} \label{or-preserving}
In the context of oriented manifolds in the above corollary, $(V, X,h)$ and $(V', X, h)$ are $\alp$-isomorphic if and only if the
isomorphism $\phi: V \to V'$ (with $h=h'\circ S(\phi)$) is  orientation preserving. Indeed, if $\phi: V \to V'$ is orientation preserving,
 then $S(\phi)_*[V] = [V']$ and $\phi^*TV' \simeq TV$ as oriented vector bundles. Hence we have
\begin{align*}
S(\phi)_*(\alp(V)) & = S(\phi)_*(c\ell(TV) \cap [V]) \\
& = S(\phi)_*(S(\phi)^*c\ell(TV') \cap [V])\\
& = c\ell(TV') \cap S(\phi)_*[V] \,\, \, \,\text {(by the projection formula)}\\
& = c\ell(TV') \cap [V'] = \alp(V').
\end{align*}
Hence $(V, X,h)$ and $(V', X, h)$ are $\alp$-isomorphic. Conversely, if  $(V, X,h)$ and $(V', X, h)$ are $\alp$-isomorphic, then we have $S(\phi)_*(\alp(V)) = \alp(V')$, i.e., $S(\phi)_*(c\ell(TV) \cap [V]) = c\ell(TV') \cap [V']$. Since $c\ell$ is normalized, we have $c\ell = 1 + \cdots$, hence $c\ell(TV) \cap [V] = [V] + \text {lower dimensional classes}$. Therefore we  have
$$S(\phi)_*[V] = [V'].$$
Hence $\phi:V \to V'$ is orientation preserving.

Similarly $\alp$ is an isomorphism invariant in the context of unoriented manifolds. 
In the context of stable complex manifolds, $\alp$ is at least 
invariant under a diffeomorphism $\phi: V\to V'$ of stable complex manifolds who preserves the stable almost complex structure (and therefore also the orientation) in the sense that $\phi^*TV' \simeq TV$ as stable complex vector bundles.
\end{rem}

If we consider in Corollary \ref{smooth-gen} all \emph{compact} (oriented) or stable complex smooth manifolds,
then we get similar results for any \emph{generalized homology theory} $\m H_*$, which has a corresponding fundamental class
$[V]\in \m H_*(V)$ for a compact (oriented) or stable complex smooth manifold $V$, e.g. for a complex oriented (co)homology theory
and $V$ a stable complex smooth manifold.\\

Let us now switch to some counterparts in the algebraic geometric context, with $\mB=\m V_k$  the category of algebraic varieties
(i.e. reduced separated schemes of finite type) over a base field $k$, and $S: \mC\to \m V_k$ the inclusion functor of a (full) subcategory
$\mC$ stable under isomorphisms, disjoint union $\sqcup$ and product $\times$, with $\emptyset, Spec(k)\in ob(\mC)$.
First we consider the subcategory $\mC= \m V_k^{sm}$ of smooth varieties. The proof of the following result is identical to that of
Corollary \ref{smooth-gen}.

\begin{cor} \label{cor-alg}
Consider the cospan of categories
$$\m V_k^{sm} \xrightarrow {S} \m V_k \xleftarrow {id_{\m V_k}} \m V_k,$$ 
together with a contravariant functorial characteristic class
$$c\ell(E) \in  CH^*(-)\otimes R\quad \text{or $\quad c\ell(E) \in  H^{2*}(-;R)\:\:$ for $k=\bC$}$$ 
of (isomorphism classes of) algebraic  vector bundles, which is 
\emph{multiplicative} in the sense that $$c\ell(E) = c\ell(E') \cup c\ell(E'')$$ for any short exact sequence
$0\to E'\to E\to E''\to 0$ of such vector bundles.
Here $CH^*(-)$ is the operational Chow cohomology group of
\cite{Fulton-book}. For a smooth algebraic manifold $V$, let 
$$\alp(V) := c\ell(TV) \cap [V]\in \mH_*(-)\otimes R,$$
with $[V]$ the fundamental class of $V$ for $\mH_*=CH_*$ the Chow group or 
$H_{2*}^{BM}$ the even degree Borel-Moore homology in the case of $k=\bC$.
Then the isomorphism invariant $\alp$ is \emph{additive and multiplicative}. 
By Theorem \ref{main},
there exists a unique natural transformation
$$\tau_{c\ell}: K^{prop}(\m V_k^{sm} \xrightarrow {S} \m V_k /-) \to \mH_*(-)\otimes R$$
such that for a smooth algebraic manifold $V$
$$\tau_{c\ell}([(V, V, id_{V})]) = c\ell(TV) \cap [V].$$
And $\tau_{c\ell}$ is also multiplicative (by Proposition \ref{mult}), i.e.:
$$\tau_{c\ell}([(V, X, h)] \times [(W, Y, k)]) = \tau_{c\ell}([(V, X, h)]) \boxtimes \tau_{c\ell}([(W, Y, k)]).$$
\end{cor}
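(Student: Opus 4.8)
The plan is to transcribe the proof of Corollary \ref{smooth-gen} into the algebraic setting, replacing the differential-topological inputs (functoriality of $c\ell$, fundamental classes, the projection formula, compatibility of cap and cross products) by their counterparts for $CH_*$ with the operational Chow cohomology $CH^*$, respectively for $H^{BM}_{2*}$ and $H^{2*}$ when $k=\bC$. First I would record that $\alpha(V) = c\ell(TV)\cap[V]$ is an isomorphism invariant: for an isomorphism $\phi:V\xrightarrow{\cong}V'$ of smooth varieties one has $\phi^*TV'\cong TV$ as algebraic vector bundles and $\phi_*[V]=[V']$, so the projection formula gives $\phi_*(\alpha(V)) = \phi_*(\phi^*c\ell(TV')\cap[V]) = c\ell(TV')\cap\phi_*[V] = \alpha(V')$. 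Since $\alpha$ depends only on the abstract variety $V$, the $\alpha$-isomorphism classes coincide with the isomorphism classes, which is why the subscript $\alpha$ may be dropped from the notation.

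Next I would verify that $\alpha$ is additive and multiplicative, exactly as in the proof of Corollary \ref{smooth-gen}. For additivity: from $[V\sqcup V'] = i_*[V]+i'_*[V']$, the functoriality $i^*c\ell(T(V\sqcup V'))=c\ell(TV)$ (and similarly for $i'$), and the projection formula for the proper inclusions $i,i'$, one obtains $\alpha(V\sqcup V') = i_*\alpha(V)+i'_*\alpha(V')$. For multiplicativity: using the canonical split sequence $T(V\times V')\cong \mathrm{pr}_1^*TV\oplus \mathrm{pr}_2^*TV'$, the multiplicativity hypothesis on $c\ell$ together with its functoriality gives $c\ell(T(V\times V')) = c\ell(TV)\boxtimes c\ell(TV')$; combined with $[V\times V']=[V]\boxtimes[V']$ and the compatibility of the cap product with the exterior product, this yields $\alpha(V\times V') = (c\ell(TV)\cap[V])\boxtimes(c\ell(TV')\cap[V']) = \alpha(V)\boxtimes\alpha(V')$, with no sign since $c\ell$ has even degree.

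With additivity established, Theorem \ref{main} — applied in its proper-morphism version to the cospan $\m V_k^{sm}\xrightarrow{S}\m V_k\xleftarrow{id}\m V_k$, with the additive functor $X\mapsto \mH_*(X)\otimes R$ and with $T=id_{\m V_k}$ (which is full), in case (i) of Remark \ref{dist} — produces the unique natural transformation $\tau_{c\ell}$ satisfying $\tau_{c\ell}([(V,V,id_V)]) = c\ell(TV)\cap[V]$. For the multiplicativity of $\tau_{c\ell}$ I would invoke Proposition \ref{mult}(2): the subcategory $\m V_k^{sm}$ is symmetric monoidal under $\times$ (a product of smooth $k$-varieties is smooth) and $S$ is strict monoidal for both $\sqcup$ and $\times$; the exterior product $\boxtimes$ on $\mH_*(-)\otimes R$ satisfies $\boxtimes\circ(f_*\times g_*) = (f\times g)_*\circ\boxtimes$ for proper morphisms; and $\alpha$ was just shown to be multiplicative. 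Hence the cross-product square of Proposition \ref{mult}(2) commutes, which is the final assertion.

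I expect the only genuinely delicate point to be the product case, namely checking that $CH^*$ (resp. $H^{2*}$ for $k=\bC$) and its cap product on $CH_*$ (resp. $H^{BM}_{2*}$) are compatible with exterior products without an extra sign in the degrees that actually occur, and that the multiplicativity hypothesis on $c\ell$, phrased for short exact sequences, indeed applies to $T(V\times V')$ through its natural splitting. Everything else is a formal copy of the proof of Corollary \ref{smooth-gen} together with a routine verification of the hypotheses of Proposition \ref{mult}.
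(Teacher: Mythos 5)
Your proposal matches the paper's proof, which is explicitly stated to be "identical to that of Corollary \ref{smooth-gen}": you verify that $\alpha(V)=c\ell(TV)\cap[V]$ is an isomorphism invariant and is additive and multiplicative via functoriality of $c\ell$, the projection formula, and compatibility of cap and cross products, then invoke Theorem \ref{main} (proper version, case (i) of Remark \ref{dist} since $T=id_{\m V_k}$ is full) for existence and uniqueness, and Proposition \ref{mult}(2) for the cross-product compatibility. This is exactly the intended argument.
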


\begin{rem} \label{rem-LM}
If we only consider projective morphisms and (pure dimensional) quasi-projective smooth varieties, then a similar result holds for
$\mH_*$ an \emph{oriented Borel-Moore weak homology theory} in the sense of \cite{Levine-Morel} and
$c\ell$ a multiplicative characteristic class as in \cite[\S 4.1.8]{Levine-Morel},
which 
is, for a line bundle $L$, given by a normalized power series in the first Chern class operator of $L$
with respect to $\mH_*$:
$$c\ell(L)=f(\tilde{c}_1(L)), \quad \text{with $f(t)\in 1+t\cdot \mH_*(pt)[[t]]$.}$$
Here the fundamental class of a quasi-projective smooth variety $V$ of pure dimension $d$ is defined as
$$[V]:=k^*1_{pt} \in \m H_d(V) \quad \text{for $k: V\to pt$ the constant smooth morphism.}$$
\end{rem}

Next we consider the subcategory $ \mC= \m V^{(l)pd}_k$ of (locally) pure-dimensional algebraic varieties over $k$. 
Let us first recall the universal property of the \emph{Nash blow-up}
$\nu: \widehat X \to X$ of a pure $d$-dimensional algebraic variety $X\in Obj(V^{pd}_k)$, with 
 $\widehat {TX}$ the \emph{tautological Nash tangent bundle} over $\widehat X$:
Let $\pi: \overline{X}\to X$ be a proper birational map with a surjection $\pi^*\Omega^1_X\to \overline{\Omega}$
to a locally free sheaf $\overline{\Omega}$ of rank $d$ on $\overline{X}$. Then the Nash blow-up
$\nu: \widehat X \to X$ is 
universal in the sense that 
$\pi: \overline{X}\to X$ factors through $\pi': \overline{X}\to \widehat X$,
with $\overline{\Omega}\simeq \pi'^*\widehat \Omega$, where the tautological Nash tangent bundle
 $\widehat {TX}$ over $\widehat X$ corresponds to the dual of $\widehat \Omega$.

\begin{defn} Let $c\ell$ be a functorial characteristic class of algebraic vector bundles
as in Corollary \ref{cor-alg}. For a pure $d$-dimensional algebraic variety $X\in Obj(V^{pd}_k)$, the \emph{$c\ell$-Mather homology class} 
$c\ell^{Ma}_*(X)\in \mH_*(X)\otimes R$ is defined to be 
$$c\ell^{Ma}_*(X):= \nu_*(c\ell(\widehat {TX}) \cap [\widehat X])=\pi_*(c\ell(\overline{TX}) \cap [\overline{X}]).$$
Here $\pi: \overline{X}\to X$ is any \emph{proper birational} map with a surjection $\pi^*\Omega^1_X\to \overline{\Omega}$
to a locally free sheaf $\overline{\Omega}$ of rank $d$ on $\overline{X}$, with the vector bundle $\overline{TX}$ corresponding to the dual of $\overline{\Omega}$. This definition is extended to a locally pure-dimensional variety $X\in Obj(V^{lpd}_k)$ by additivity over the connected components of $X$.
\end{defn}

Note that the second equality in the definition above follows from the projection formula by $\overline{TX}\simeq \pi'^*\widehat {TX}$
and $\pi'_*[\overline{X}]=[\widehat X]$, since $\pi': \overline{X}\to \widehat X$ is a proper birational map
(
see \cite[Example 4.2.9(b)]{Fulton-book} 
in the case of the Chern-Mather class $c^{Ma}$ corresponding to
$c\ell=c$ the Chern class).

\begin{cor}\label{mather} 
Consider the cospan of categories
$$\m V_k^{(l)pd} \xrightarrow {S} \m V_k \xleftarrow {id_{\m V_k}} \m V_k,$$ 
together with a contravariant functorial characteristic class
$$c\ell(E) \in  CH^*(-)\otimes R\quad \text{or $\quad c\ell(E) \in  H^{2*}(-;R)\:\:$ for $k=\bC$}$$ 
of (isomorphism classes of) algebraic  vector bundles, which is 
\emph{multiplicative}.
Here $CH^*(-)$ is the operational Chow cohomology group 
\cite[Definition 17.3]{Fulton-book}. For a (locally) pure-dimensional algebraic variety $V$, let 
$$\alp(V) := c\ell^{Ma}_*(V)\in \mH_*(V)\otimes R,$$
 for $\mH_*=CH_*$ the Chow group or 
$H_{2*}^{BM}$ the even degree Borel-Moore homology in the case of $k=\bC$.
\begin{enumerate}
\item The isomorphism invariant $\alp$ is \emph{additive and multiplicative}. 
By Theorem \ref{main},
there exists a unique natural transformation
$$\tau_{c\ell^{Ma}_*}: K^{prop}(\m V_k^{(l)pd} \xrightarrow {S} \m V_k /-) \to \mH_*(-)\otimes R$$
such that for a (locally) pure-dimensional algebraic variety $V$
$$\tau_{c\ell^{Ma}_*}([(V, V, id_{V})]) = c\ell^{Ma}_*(V).$$
And $\tau_{c\ell^{Ma}_*}$ is also multiplicative (by Proposition \ref{mult}), i.e.:
$$\tau_{c\ell^{Ma}_*}([(V, X, h)] \times [(W, Y, k)]) = \tau_{c\ell^{Ma}_*}([(V, X, h)]) \boxtimes \tau_{c\ell^{Ma}_*}([(W, Y, k)]).$$
\item When $c\ell = c$ is the Chern class, then the following diagram commutes for $k$ of characteristic zero:
$$\xymatrix{K^{prop} (\m V_k^{(l)pd} \xrightarrow {S} \m V_k/X) \ar[dr]_ {\tau_{c^{Ma}_*}}\ar[rr]^ {\m{E}u} && F(X) \ar[dl]^{c_*^{Mac}}\\& \mH_*(X)\:.}$$
Here the multiplicative natural transformation $\m{E}u: K^{prop}(\m V_k^{(l)pd} \xrightarrow {C} \m V_k /X)\to F(X)$ to the group  of constructible functions is defined by the isomorphism invariant $\alp(V):=Eu_V$, where $Eu_V$ is the local Euler obstruction of the (locally) pure-dimensional variety $V$.
This invariant is also additive and multiplicative.
\end{enumerate}
\end{cor}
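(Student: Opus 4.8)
The plan is to handle the two parts in turn, in each case reducing the assertion to Theorem~\ref{main}, Proposition~\ref{mult}, and a single geometric fact about the Nash construction, so that almost everything becomes formal once that fact is in place.

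For part (1), the first step is to verify the three properties of $\alp(V)=c\ell^{Ma}_*(V)$ needed to feed it into Theorem~\ref{main} and Proposition~\ref{mult}. Being an isomorphism invariant is immediate, since the Nash blow-up $\nu:\widehat X\to X$ and the tautological Nash tangent bundle $\widehat{TX}$ are functorial under isomorphisms; additivity is built in, as $c\ell^{Ma}_*$ is defined by additivity over connected components and the connected components of $V\sqcup V'$ are precisely those of $V$ and those of $V'$. The substance is multiplicativity, for which I would use the universal property of the Nash blow-up recalled before the Definition: if $\pi:\overline X\to X$ and $\pi':\overline Y\to Y$ are proper birational maps with surjections $\pi^*\Omega^1_X\twoheadrightarrow\overline\Omega$, $\pi'^*\Omega^1_Y\twoheadrightarrow\overline\Omega'$ onto locally free sheaves of ranks $\dim X$, $\dim Y$, then $\pi\times\pi':\overline X\times\overline Y\to X\times Y$ is again such a map, with $(\pi\times\pi')^*\Omega^1_{X\times Y}$ surjecting onto $p_1^*\overline\Omega\oplus p_2^*\overline\Omega'$, whose dual is $p_1^*\overline{TX}\oplus p_2^*\overline{TY}$. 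Invoking the choice-independent second description of $c\ell^{Ma}_*$ in the Definition, together with multiplicativity of $c\ell$ (so that $c\ell(p_1^*\overline{TX}\oplus p_2^*\overline{TY})=c\ell(\overline{TX})\boxtimes c\ell(\overline{TY})$), the identity $[\overline X\times\overline Y]=[\overline X]\boxtimes[\overline Y]$, the compatibility of $\cap$ with $\boxtimes$ (with no sign, since $c\ell$ lies in even degrees), and the compatibility of proper pushforward with $\boxtimes$, one gets $c\ell^{Ma}_*(X\times Y)=c\ell^{Ma}_*(X)\boxtimes c\ell^{Ma}_*(Y)$ for pure-dimensional $X,Y$, and hence in general by additivity. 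Then Theorem~\ref{main} — with $T=\op{id}_{\m V_k}$ (which is full) in case (i) of Remark~\ref{dist}, so that the distinguished element is $\delta_V=[(V,V,\op{id}_V)]$ — yields the unique $\tau_{c\ell^{Ma}_*}$ with the stated normalization, and Proposition~\ref{mult}(2) — using that the inclusion $S$ is strict monoidal for $\times$ as well as $\sqcup$, and that $\mH_*$ ($=CH_*$, $CH_*\otimes R$, or $H^{BM}_{2*}$) carries a cross product $\boxtimes$ compatible with proper pushforward — gives multiplicativity of $\tau_{c\ell^{Ma}_*}$.

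For part (2) I would run the same three checks for $\alp(V):=Eu_V$. It is an isomorphism invariant because the local Euler obstruction is intrinsic, and additive because $Eu$ is again defined component-wise. Its multiplicativity, $Eu_{V\times W}=Eu_V\boxtimes Eu_W$ with $(\beta\boxtimes\gamma)(x,y):=\beta(x)\gamma(y)$, follows from the same product-compatibility of the Nash blow-up used above together with the choice-independent local (Segre-class type) description of the Euler obstruction; this is the one step that is not purely formal and the main obstacle of the whole statement — although it is a known fact about the local Euler obstruction and could simply be cited. Granting it, Theorem~\ref{main} produces $\m{E}u$, and Proposition~\ref{mult}(2) — the cross product on constructible functions being compatible with proper pushforward, by the multiplicativity of the Euler characteristic on fibre products — shows $\m{E}u$ is multiplicative.

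Finally, for the commutativity of the triangle, I would observe that for $k$ of characteristic zero $c^{Mac}_*:F(-)\to\mH_*(-)$ is a natural transformation with respect to proper morphisms (\cite{MacPherson}), so the composite $c^{Mac}_*\circ\m{E}u$ is a natural transformation $K^{prop}(\m V_k^{(l)pd}\xrightarrow{S}\m V_k/-)\to\mH_*$ with respect to proper morphisms. It sends the distinguished element $\delta_V=[(V,V,\op{id}_V)]$ to $c^{Mac}_*(Eu_V)=c^{Ma}_*(V)$, the last equality being the basic relation through which MacPherson's transformation is built out of the Chern--Mather class. Since $\tau_{c^{Ma}_*}$ sends $\delta_V$ to $c^{Ma}_*(V)$ as well, the uniqueness clause of Theorem~\ref{main} (again $T=\op{id}_{\m V_k}$ full, case (i), now applied to the isomorphism invariant $c^{Ma}_*$) forces $c^{Mac}_*\circ\m{E}u=\tau_{c^{Ma}_*}$, which is exactly the asserted commutativity.
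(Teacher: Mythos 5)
Your proposal follows essentially the same route as the paper's proof: multiplicativity of $c\ell^{Ma}_*$ via the product of proper birational maps with surjections onto locally free sheaves together with the choice-independence built into the definition, then Theorem~\ref{main} and Proposition~\ref{mult}, and finally the triangle from $c^{Mac}_*(Eu_V)=c^{Ma}_*(V)$ plus the uniqueness clause. The one step you flag but defer — multiplicativity of $Eu$ as a constructible function — is exactly what the paper proves concretely, via the Gonzalez-Sprinberg--Verdier Segre-class formula for the local Euler obstruction and the product formula for relative Segre classes from Fulton's book, which is the ``Segre-class type'' ingredient you gesture at.
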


\begin{proof} (1) That the isomorphism invariant $\alp(V) := c\ell^{Ma}_*(V)$ is additive follows from the fact,
that it commutes with restriction to open subsets (e.g. a connected component). For the multiplicativity we can then assume
that $V,V'$ are pure dimensional. Let $\pi: \overline{V}\to V$ be a proper birational map with a surjection $\pi^*\Omega^1_X\to \overline{\Omega}$
to a locally free sheaf $\overline{\Omega}$, and similarly for $V'$. Then $\pi\times \pi': \overline{V}\times \overline{V'}\to V\times V'$
is a proper birational map with a surjection
$$(\pi\times \pi')^*\Omega^1_{V\times V'}\simeq \pi^*\Omega^1_V\boxtimes \pi'^*\Omega^1_{V'}\to \overline{\Omega}\boxtimes  \overline{\Omega'}$$
so that
\begin{align*}
c\ell^{Ma}_*(V\times V')&= (\pi\times \pi')_*\left( c\ell (\overline{TV}\boxtimes  \overline{TV'})\cap [\overline{V}\times \overline{V'}] \right)\\
&= (\pi\times \pi')_*\left( (c\ell(\overline{TV})\cap [\overline{V}])\boxtimes ( c\ell(\overline{TV'})\cap [\overline{V'}]) \right)\\
&= \left( \pi_*(c\ell(\overline{TV})\cap [\overline{V}])\right) \boxtimes  \left( \pi'_*(c\ell(\overline{TV'})\cap [\overline{V'}] )\right)\\
&= c\ell^{Ma}_*(V) \boxtimes c\ell^{Ma}_*(V').
\end{align*}
(2) follows from the construction \cite{MacPherson} of the Chern class transformation 
$$c_*^{Mac}: F(X) \to H^{BM}_{2*}(X;\bZ)$$
for the case $k=\bC$.
MacPherson defined $c_*^{Mac}(Eu_V) = (i_V)_*c_*^{Ma}(V)$ for $i_V: V\to X$ the inclusion of a pure-dimensional subvariety,
with $Eu_V$ his famous local Euler obstruction. Namely, the ``constructible function" counterpart of the Chern--Mather homology class has to be the local Euler obstruction, which is one of his key observations. This is extended to locally pure-dimensional subvarieties by additivity over connected components,
e.g. the local Euler obstruction is then by definition the sum of the local Euler obstructions of all connected components.
The algebraic counterpart of the MacPherson Chern class transformation
$$c_*^{Mac}: F(X) \to CH_*(X)$$
for a base field $k$ of characteristic zero was constructed in \cite{Ke} (at least if $X$ is embeddable into a smooth variety.
The general case can be reduced to this using the method of Chow envelopes as in \cite[Chapter 18.3]{Fulton-book}).
Moreover, Kennedy also explained in \cite {Ke} that $Eu_V\in F(V)$ is a constructible function, where he used the algebraic definition
of the local Euler obstruction as in \cite[Example 4.2.9] {Fulton-book}(due to Gonzalez-Sprinberg and Verdier):
$$Eu_V(p)=\int_{\pi^{-1}(p)} c \left(\overline{TV}|_ {\pi^{-1}(p)} \right) \cap s \left(\pi^{-1}(p),\overline{V} \right),$$
with $\pi: \overline{V}\to V$ and $\overline{TV}$ are as in the definition of the $c\ell$-Mather homology classes. 
Here $s\bigl(\pi^{-1}(p),\overline{V} \bigr)$ is the Segre class of the fiber $\pi^{-1}(p)$ in $\overline{V}$ in the sense of
\cite[Chapter 4.2]{Fulton-book}. Then the multiplicativity of $Eu_V$ follows as in (1) using \cite[Example 4.2.5]{Fulton-book}:
$$s \Bigl(\pi^{-1}(p)\times \pi'^{-1}(p'),\overline{V}\times \overline{V'} \Bigr)=s \bigl(\pi^{-1}(p),\overline{V} \bigr)\boxtimes s \bigl(\pi'^{-1}(p'),\overline{V'} \bigr).$$
Here the bilinear (functorial) cross product 
$$\boxtimes: F(X)\times F(Y)\to F(X\times Y)$$
is just defined by $\beta\boxtimes \beta' ((p,p')):=\beta(p)\cdot \beta'(p')$.
\end{proof}

\begin{rem} Assume that the base field $k$ is of characteristic zero.
\begin{enumerate}
\item Using resolution of singularities one can show that for a given algebraic variety $X$ there are finitely many irreducible subvarieties $V$'s and integers $a_V$'s such that
$$\jeden_X = \sum_{V \subset X} a_V Eu_V \quad \text{, thus} \quad c_*^{Mac}(\jeden_X) = \sum_{V \subset X} a_V c_*^{Ma}(V).$$  Whether $X$ is singular or not, $c_*^{Mac}(X):=c_*^{Mac}(\jeden_X)$ is called MacPherson's Chern class or Chern--Schwarz--MacPherson class of $X$ (see \cite{BrSc, Ke, MacPherson, Schw1, Schw2}). 
For $X$ complete, it follows from the naturality of the transformation $c_*^{Mac}$ with respect to the proper constant map
$X\to pt$, that the degree of the $0$-dimensional component of $c_0^{Mac}(X)$ is equal to the Euler--Poincar\'e characteristic:
$$\int_X c_0^{Mac}(X) = \chi(X).$$
 \item Similarly, the degree of the $0$-dimensional component of the Chern--Mather class $c_*^{Ma}(X)$ for $X$ pure-dimensional and complete
is the Euler--Poincar\'e characteristic of $X$ weighted by the local Euler obstruction $Eu_X$:
 $$\int_X c_0^{Ma}(X) = \chi(X;Eu_X).$$
For $X$ a connected \emph{complex affine} algebraic variety of pure dimension,  
the \emph{global Euler obstruction $Eu(X)$} introduced and studied in \cite{STV} is a suitable ``localization'' of the $0$-dimensional component of the Chern--Mather class $c_*^{Ma}(X)$:
 $$H_0(X;\bZ)\simeq \bZ \ni  Eu(X)\mapsto  c_0^{Ma}(X) \in H^{BM}_0(X;\bZ) $$
under the natural map $H_0(X;\bZ)\to H^{BM}_0(X;\bZ) $.
 \item The above ``$c\ell$-Mather class" transformation $\tau_{c\ell^{Ma}_*}: K^{prop}(\m V_k^{(l)pd} \xrightarrow {S} \m V_k /-) \to \mH_*(-)\otimes R$
 could be considered as  a very na\"\i ve theory of characteristic classes of possibly singular  algebraic varieties (in any characteristic).
 \end{enumerate}
\end{rem}

So far we dealt with the covariance and multiplicativity of the functor $K_{(\alp)}(\mC_s \xrightarrow {S} \mB /T(-))$. Next we discuss the contravariance
with respect to (suitable) ``smooth morphisms'', where we start with the algebraic geometric context with $\mB=\m V_k$ the category of algebraic varieties over $k$ and $S: \mC\to \m V_k$ the inclusion functor of the (full) subcategories $\mC= \m V_k^{sm}$ resp. $\m V_k^{lci}$ of \emph{smooth} resp. \emph{local complete intersection} varieties.

Here $X$ is called a local complete intersection, if it has a regular closed embedding $i: X\to M$ into a smooth variety $M$
(i.e. the constant morphism $X\to pt$ is a local complete intersection morphism in the sense of \cite[Chapter 6.6]{Fulton-book}).
Then $X$ has an intrinsic virtual tangent bundle 
$$TX:=i^*TM - N_XM \in K^0(X),$$
with $N_XM$ the normal bundle of the regular embedding $i: X\to M$, i.e. $TX\in K^0(X)$ doesn't depend on the choice of this embedding
(compare \cite[Appendix B.7.6]{Fulton-book}). Of course any smooth variety $M$ is local complete intersection, with $TM$ (the class of) the usual
tangent bundle $TM$ (just choose $i=id_M: M\to M$). Note that the (virtual) tangent bundle commutes with restriction to open subsets
(e.g. connected components). Similarly, if $f: X\to Y$ is a smooth morphism with $Y$ smooth (resp. a local complete intersection),
then also $X$ is smooth (resp. a local complete intersection), and in the smooth context we have a short exact sequence of vector bundles
\begin{equation}\label{eq-s1}
 0\to T_f \to TX \to f^*TY\to 0.
\end{equation}
with $T_f$ the bundle of tangents to the fiber of the smooth morphism $f$. In particular
\begin{equation}\label{eq-s2}
TX = f^*TY + T_f \in K^0(X),
\end{equation}
and this equality in the Grothendieck group $K^0(-)$ of algebraic vector bundles even holds for a smooth morphism $f: X\to Y$ between local complete
intersections with $TX$ resp. $TY$ the corresponding virtual tangent bundle (compare e.g. with \cite[Proposition 7.1] {Fulton-Lang}as well as 
\cite[Appendix B.7]{Fulton-book}). Finally, the class of smooth morphisms is stable under base-change, with $T_{f'}\simeq h'^*T_f$ for a fiber square as in the following

\begin{lem} \label{lemma} The functor $K^{prop}(\mC \xrightarrow {S} \m V_k /-)$, with $\mC= \m V_k^{sm}$ (resp. $\m V_k^{lci}$) 
the subcategory of smooth (resp. local complete intersection) varieties,
becomes a contravariant functor for smooth morphisms on the category $\m V_k$, where for a smooth morphism $f:X \to Y$ the pullback homomorphism 
$$f^*: K^{prop}(\mC \xrightarrow {S} \m V_k /Y)\to K^{prop}(\mC \xrightarrow {S} \m V_k /X)$$
is defined by 
$$f^*([(V, Y, h)]) := [V',X,h'],$$
using the fiber square
$$\CD
V'@> {f'} >> V\\
@V h' VV @VV h V\\
X@> {f} >> Y. \endCD
$$
\end{lem}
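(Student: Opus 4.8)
The plan is to verify three things in turn: first that the assignment $f^{*}([(V,Y,h)]):=[(V',X,h')]$ lands in the correct group, i.e. that $V'\in ob(\mathcal C)$ and that $h'$ is proper; second that $f^{*}$ is well-defined on isomorphism classes and descends to a group homomorphism; and third that the assignment is functorial, $\op{id}^{*}=\op{id}$ and $(g\circ f)^{*}=f^{*}\circ g^{*}$, so that $K^{prop}(\mathcal C\xrightarrow{S}\m V_k/-)$ becomes a contravariant functor for smooth morphisms.

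For the first point I would argue as follows. Given a smooth morphism $f:X\to Y$ and a triple $(V,Y,h)$ with $h:S(V)\to Y$ proper, form the fiber square in the statement. Since $V$ is smooth (resp. lci), the base change $f':V'\to V$ of the smooth morphism $f$ is again smooth, hence $V'$ is smooth (resp. lci) by the discussion preceding the lemma (stability of smooth morphisms and of the lci property under composition with a smooth morphism having smooth, resp. lci, source). Here one must observe that in the fiber product of $h:S(V)\to Y$ along $f:X\to Y$, the variety $V'$ is naturally an object of $\m V_k$ and $f':V'\to V$ is smooth because smoothness is stable under base change; since $V$ is in $\mathcal C$ and $\mathcal C$ is closed under the passage from the target to the source of a smooth morphism (in the precise sense recalled above), we get $V'\in ob(\mathcal C)$. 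Properness of $h':S(V')\to X$ follows because properness is stable under base change and $h$ is proper. So $[(V',X,h')]\in K^{prop}(\mathcal C\xrightarrow{S}\m V_k/X)$ as required.

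For the second and third points the key input is the universal property of the fiber product together with its uniqueness up to canonical isomorphism. If $(V_1,Y,h_1)$ and $(V_2,Y,h_2)$ are isomorphic via $\phi:V_1\xrightarrow{\cong}V_2$ with $h_2\circ S(\phi)=h_1$, then pulling the square back along $f$ and using the universal property produces an isomorphism $\phi':V_1'\xrightarrow{\cong}V_2'$ over $X$ compatible with the $h_i'$, so $f^{*}$ is well-defined on isomorphism classes; in the $\alpha$-context one additionally checks $S(\phi')_{*}\alpha(V_1')=\alpha(V_2')$, which in our geometric examples holds because $\alpha$ is an isomorphism invariant (or because $\phi'$ preserves the relevant extra structure pulled back from $\phi$). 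Compatibility with $\sqcup$ is the distributivity of fiber products over disjoint unions, $(V_1\sqcup V_2)\times_Y X\simeq (V_1\times_Y X)\sqcup(V_2\times_Y X)$ (and $\emptyset\times_Y X=\emptyset$), so $f^{*}$ respects the monoid structure and hence induces a group homomorphism on Grothendieck groups. Functoriality $\op{id}_X^{*}=\op{id}$ is immediate, and $(g\circ f)^{*}=f^{*}\circ g^{*}$ follows from the standard fact that a composite of two fiber squares is again a fiber square (the ``pasting lemma'' for pullbacks), again invoking uniqueness of the fiber product up to canonical isomorphism to identify $(V')'$ with the iterated pullback.

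I expect the only genuine subtlety — the ``main obstacle'' — to be the bookkeeping in the first paragraph: one must make sure that the passage to the fiber product really keeps us inside the chosen subcategory $\mathcal C$, i.e. that $V'$ is smooth (resp. lci) and not merely a scheme, and that in the lci case the virtual tangent bundle behaves well under this base change (which is exactly the compatibility $TV'=f'^{*}TV$ and, via the square, $f'^{*}T_f$-type relations recalled before the lemma). Everything else is a formal consequence of the universal property of fiber products: well-definedness, additivity, and contravariant functoriality all reduce to the pasting lemma and to distributivity of $\times_Y$ over $\sqcup$, so once the closure of $\mathcal C$ under smooth base change is in hand the remaining verifications are routine diagram chases.
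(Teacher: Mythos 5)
Your proof is correct and takes essentially the same approach the paper intends: the paper does not write out a proof of this lemma but instead assembles the relevant facts just before it (stability of smoothness and the lci property under smooth morphisms, stability of smooth morphisms and properness under base change, the identity $T_{f'}\simeq h'^*T_f$) and remarks afterwards only that fiber squares commute with disjoint unions. Your write-up simply makes explicit the routine checks the paper leaves to the reader — closure of $\mC$ under smooth base change, well-definedness and functoriality via the universal property and pasting of pullback squares, and additivity via distributivity of $\times_Y$ over $\sqcup$ — noting also that the $\alpha$-isomorphism discussion is superfluous here since the lemma concerns $K^{prop}$, not $K^{prop}_{\alp}$.
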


Of course here we also use the fact that taking such fiber squares commutes with disjoint unions in $V$ resp. $V'$.

\begin{thm}[Verdier-type Riemann--Roch] \label{Verdier-RR}
Consider the cospan of categories
$$\mC \xrightarrow {S} \m V_k \xleftarrow {id_{\m V_k}} \m V_k,$$ 
with $\mC= \m V_k^{sm}$ (resp. $\m V_k^{lci}$) 
the subcategory of \emph{smooth} (resp. \emph{local complete intersection}) varieties.
Let $c\ell$ be a  contravariant functorial characteristic class
$$c\ell(E) \in  CH^*(-)\otimes R\quad \text{or $\quad c\ell(E) \in  H^{2*}(-;R)\:\:$ for $k=\bC$}$$ 
of (isomorphism classes of) algebraic  vector bundles, which is 
multiplicative (and normalized in case $\mC= \m V_k^{lci}$, so that it can also be defined on virtual vector bundles in $K^0(-)$).
For a smooth (resp. local complete intersection) variety $V$, let 
$$\alp(V) := c\ell(TV)\cap [V]\in \mH_*(V)\otimes R$$
be the corresponding (virtual) characteristic homology class, with
$\mH_*=CH_*$ the Chow group or 
$H_{2*}^{BM}$ the even degree Borel-Moore homology in case $k=\bC$.

The isomorphism invariant $\alp$ is \emph{additive}, so that  
by Theorem \ref{main}
there exists a unique natural transformation
$$\tau_{c\ell}: K^{prop}(\mC \xrightarrow {C} \m V_k /-) \to \mH_*(-)\otimes R,$$
such that for a smooth (resp. local complete intersection) algebraic variety $V$
$$\tau_{c\ell}([(V, V, id_{V})]) =  c\ell(TV)\cap [V].$$
Then this natural transformation $\tau_{c\ell}$ satisfies the following Verdier-type Riemann--Roch formula: For a smooth morphism $f:X \to Y$ the following diagram commutes:
$$\CD
K^{prop}(\mC \xrightarrow {S} \m{V}_k /Y)  @> {\tau_{c\ell}} >> \mH_*(Y)\otimes R\\
@V f^* VV @VV c\ell(T_f) \cap f^* V\\
K^{prop}(\mC \xrightarrow {S} \m{V}_k /X) @>>  {\tau_{c\ell}}  > \mH_*(X)\otimes R. \endCD
$$
\end{thm}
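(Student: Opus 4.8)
The plan is to check commutativity of the square on a generator $[(V,Y,h)]$ of $K^{prop}(\mC \xrightarrow{S}\m V_k/Y)$ and then extend by additivity, which is legitimate since $\tau_{c\ell}$ is a group homomorphism by Theorem \ref{main}, $f^*$ is a group homomorphism by Lemma \ref{lemma}, and $c\ell(T_f)\cap f^*(-)$ is manifestly additive; note also that $\alp(V)=c\ell(TV)\cap[V]$ is an isomorphism invariant, so there is no need to carry along $\alp$-decorations. So fix $[(V,Y,h)]$ with $V\in ob(\mC)$ and $h:V\to Y$ proper, and form the fiber square of Lemma \ref{lemma} defining $f^*[(V,Y,h)]=[(V',X,h')]$. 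Here $V'=V\times_Y X$ again lies in $\mC$: the base-changed morphism $f':V'\to V$ is smooth, so $V'$ is smooth (resp.\ a local complete intersection) together with $V$; and $h'$ is proper, properness being stable under base change.

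Next I would collect the geometric inputs, all applied to the above fiber square. (a) Compatibility of proper pushforward with smooth (flat) pullback in a fiber square: $f^*\circ h_*=h'_*\circ f'^*$ on $\mH_*(-)\otimes R$ (the base-change formula for Chow groups, resp.\ its Borel--Moore counterpart for $k=\bC$, see \cite{Fulton-book}). (b) Smooth pullback of the fundamental class: $f'^*[V]=[V']$, since $f'$ is smooth and $V$ is pure-dimensional on each connected component. (c) The $K^0$-theoretic identity $TV'=f'^*TV+T_{f'}$ obtained by applying $(\ref{eq-s2})$ to the smooth morphism $f'$, together with $T_{f'}\simeq h'^*T_f$ coming from the stability of smooth morphisms under base change; multiplicativity and functoriality of $c\ell$ then give $c\ell(TV')=f'^*c\ell(TV)\cup h'^*c\ell(T_f)$. (d) The projection formula for $h'$ together with the compatibility of cap products with flat pullback, $f'^*(c\cap\alpha)=f'^*c\cap f'^*\alpha$ for $c$ an operational (cohomology) class, and the graded-commutativity of the cap action.

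With these in hand the verification is the following chain of equalities:
\begin{align*}
\tau_{c\ell}\bigl(f^*[(V,Y,h)]\bigr)
&= h'_*\bigl(c\ell(TV')\cap[V']\bigr)
 = h'_*\bigl((f'^*c\ell(TV)\cup h'^*c\ell(T_f))\cap[V']\bigr)\\
&= h'_*\bigl(h'^*c\ell(T_f)\cap\bigl(f'^*c\ell(TV)\cap f'^*[V]\bigr)\bigr)\\
&= h'_*\bigl(h'^*c\ell(T_f)\cap f'^*\bigl(c\ell(TV)\cap[V]\bigr)\bigr)\\
&= c\ell(T_f)\cap h'_*f'^*\bigl(c\ell(TV)\cap[V]\bigr)
 = c\ell(T_f)\cap f^*h_*\bigl(c\ell(TV)\cap[V]\bigr)\\
&= c\ell(T_f)\cap f^*\bigl(\tau_{c\ell}([(V,Y,h)])\bigr),
\end{align*}
where the first line uses the definition of $\tau_{c\ell}$ from Theorem \ref{main} and (c), the second and third lines use (b) and (d), the fourth line uses the projection formula of (d) and then the base change (a), and the last line is again the definition of $\tau_{c\ell}$. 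This is exactly the asserted commutativity.

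The individual steps are routine; the only point requiring a little care is the local complete intersection case, where $TV$ and $TV'$ are merely virtual classes in $K^0(-)$, so that one must use the normalization of $c\ell$ to make sense of $c\ell(TV)$ on virtual bundles, and verify that the relation $TV'=f'^*TV+T_{f'}$ still holds in $K^0(V')$ for the smooth morphism $f'$ between local complete intersections --- which is precisely $(\ref{eq-s2})$ as recalled in the paragraph preceding Lemma \ref{lemma} (compare \cite[Proposition 7.1]{Fulton-Lang} and \cite[Appendix B.7]{Fulton-book}). Once that identity is granted, multiplicativity of $c\ell$ on $K^0$ yields $c\ell(TV')=f'^*c\ell(TV)\cup h'^*c\ell(T_f)$ exactly as in the smooth case, and the displayed computation goes through verbatim.
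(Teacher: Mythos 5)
Your proof is correct and takes essentially the same approach as the paper's: the same fiber square, the same base-change identity $f^*h_* = h'_*f'^*$, the same projection formula, the identification $T_{f'}\simeq h'^*T_f$, the $K^0$-identity $TV' = f'^*TV + T_{f'}$, and the relation $[V'] = f'^*[V]$. The only cosmetic difference is that you run the chain of equalities in a single direction from $\tau_{c\ell}(f^*[\cdot])$ to $c\ell(T_f)\cap f^*\tau_{c\ell}[\cdot]$, whereas the paper expands both sides and meets in the middle; you also spell out the lci case a bit more explicitly, which is a helpful addition.
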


\begin{proof} On one hand we have:
\begin{align*}
\tau_{c\ell}(f^*([(V, Y, h)])) & = \tau_{c\ell}([(V', X, h')]) \\
& = h'_*(c\ell(TV') \cap [V']). 
\end{align*}
On the other hand we have
\begin{align*}
& c\ell(T_f) \cap  f^*(\tau_{c\ell}([(V, Y, h)]) \\
& = c\ell(T_f) \cap f^* (h_* (c\ell (TV) \cap [V]))
\end{align*}

For a  fiber square
$$\CD
V'  @> f'>> V \\
@V h' VV @VV h V\\
X@>>  f  > Y \endCD$$
with $h:V \to Y$ proper and $f:X \to Y$ smooth, 
we have the base change identity (see \cite[Proposition 1.7]{Fulton-book}):
$$f^*h_* = h'_*{f'}^*: \mH_*(V)\otimes R \to \mH_*(X)\otimes R \:.$$
Hence the above equality continues as follows:
\begin{align*}
& = c\ell(T_f) \cap h'_*{f'}^* (c\ell (TV) \cap [V]))\\
& = h'_* ({h'}^*c\ell(T_f) \cap {f'}^* (c\ell (TV) \cap [V])) \, \text {(by the projection formula)}\\
& = h'_* (c\ell(T_{f'}) \cap {f'}^* (c\ell (TV) \cap [V])) \, \text {(by $T_{f'}\simeq h'^*T_f$)}\\
& = h'_* \left (c\ell(T_{f'}) \cap (c\ell ({f'}^*TV) \cap {f'}^*[V]) \right ) \, \text {(by functoriality of $c\ell(-)\cap$)}\\
& = h'_* \left( (c\ell(T_{f'}) \cup c\ell ({f'}^*TV)) \cap [V']\right)\\
&= h'_* \left (c\ell(TV') \cap [V'] \right ) 
\end{align*}
by the multiplicativity of $c\ell$ and (\ref{eq-s1}) resp. (\ref{eq-s2}). Of course we also used the relation
$[V']={f'}^*[V]$ for the fundamental classes.
Therefore we get that 
$$\tau_{c\ell}(f^*([(V, Y, h)]))  = c\ell(T_f) \cap f^*(\tau_{c\ell}([(V, Y, h)]),$$
and the above diagram of the theorem commutes.
\end{proof}

\begin{rem} \label{rem-LM2}
If we 
consider only projective morphisms and (pure dimensional) quasi-projective local complete intersection (resp. smooth)
varieties, then a similar Verdier-type Riemann--Roch formula holds for
$\mH_*$ an \emph{oriented Borel-Moore (weak) homology theory} in the sense of \cite{Levine-Morel} and
$c\ell$ a multiplicative characteristic class as in \cite[\S 7.4.1 resp. \S 4.1.8]{Levine-Morel},
which 
is, for a line bundle $L$, given by a normalized power series in the first Chern class operator of $L$
with respect to $\mH_*$:
$$c\ell(L)=f(\tilde{c}_1(L)), \quad \text{with $f(t)\in 1+t\cdot \mH_*(pt)[[t]]$.}$$
Here the fundamental class of a quasi-projective local complete intersection (resp. smooth) variety $V$ of pure dimension $d$ is defined as
$$[V]:=k^*1_{pt} \in \m H_d(V) \quad \text{for $k: V\to pt$ the constant local complete intersection (resp. smooth) morphism}$$
so that $[V']={f'}^*[V]$ for a smooth morphism $f': V'\to V$ by functoriality of $f'^*$.
\end{rem}

\begin{rem} Assume that the base field $k$ is of characteristic zero.
\begin{enumerate}
\item The interesting thing about the motivic Hirzebruch class or MacPherson Chern class transformation
$${T_y}_*: K^{prop}(\m V^{sm}_k\xrightarrow {S} \m V_k /-) \to \m H_*(-)\otimes \bQ[y]
\quad \text{or} \quad
c^{Mac}_*: K^{prop}(\m V^{sm}_k\xrightarrow {S} \m V_k /-) \to \m H_*(-)$$
is for example, that in the above discussions, the na\" \i ve relative Grothendieck group $K^{prop}(\m V^{sm}_k\xrightarrow {S} \m V_k /X)$ can be replaced by the much smaller and more interesting relative Grothendieck group $K_0(\m V_k/X)$, by imposing one more \emph{ additivity relation}, as recalled in the introduction:
$$[V \xrightarrow h X] = [W \xrightarrow {h|_W} X] + [V \setminus W \xrightarrow {h|_{V \setminus W}} X],$$
with $W \subset V$ is a closed subvariety of $V$. So these two transformations factorize over the tautological surjective transformation
$$ K^{prop}(\m V^{sm}_k\xrightarrow {S} \m V_k /-) \to K_0(\m V_k/-).$$
In particular the multiplicativity result of Corollary \ref{cor-alg} and the Verdier-type Riemann--Roch formula of Theorem \ref{Verdier-RR}
are true for the motivic Hirzebruch class and the MacPherson Chern class transformation (compare \cite{BSY}).
\item The corresponding Hirzebruch class $T_{y*}(X)=T_{y*}(id_X)$ and MacPherson Chern class $c_*^{Mac}(X)=c_*^{Mac}(id_X)$
of an algebraic variety $X$ is not only invariant under an isomorphism, but also under a proper \emph{(geometric) bijection}:
$$f_*\left(T_{y*}(X)\right)=T_{y*}(Y) \quad \text{and} \quad f_*\left(c_*^{Mac}(X)\right)=c_*^{Mac}(Y)$$
for a proper morphism $f: X\to Y$ such that the induced map $f: X(\bar{k}) \to Y(\bar{k})$ is a bijection of sets (with $\bar{k}$ an algebraic closure of $k$),
since then $f_*([id_X])=[id_Y]$ by Noetherian induction using additivity and generic smoothness of $f$.
Of course for MacPherson's Chern class $c_*^{Mac}(X)=c_*^{Mac}(\jeden_X)$
this also follows from $f_*\jeden_X=\jeden_Y\in F(Y)$ for a proper (geometric) bijection $f: X\to Y$.
Similarly,  Baum--Fulton--MacPherson's Todd class $td^{BFM}_*(X)=td^{BFM}_*([\m O_X])$ is invariant under a such a proper (geometric) bijection
(for $k$ of any characteristic),
since $f_*[\m O_X]=[\m O_Y]\in G_0(Y)$. Finally, 
Goresky--MacPherson's or Cappell--Shaneson's homology $L$-class $L_*(X)=L_*^{CS}([\m {IC}_X])$ of a (locally) pure-dimensional compact complex algebraic variety $X$ is not only invariant under a  proper bijection, but more generally under taking the normalization of $X$. 
\end{enumerate} \end{rem}

Let us finish this section with a counterpart in the differential-topological context, with $\m B=\m C^{\infty}$ the category 
of $\m C^{\infty}$-manifolds and $S: \mC\to \m C^{\infty}$ the forget functor from the 
the category $\mC = \mC_{(o)}^{\infty}$ or $\mC^{\infty}_{\bC}$ of all differentiable (oriented) or stable complex $\m C^{\infty}$-manifolds.

\begin{lem} \label{lemma2} The functor $K^{prop}(\mC \xrightarrow {S} \m C^{\infty}/-)$ for $\mC = \mC_{(o)}^{\infty}$ or $\mC^{\infty}_{\bC}$
becomes a contravariant functor for a ((complex) oriented) submersion  $f:X \to Y$ of $\m C^{\infty}$-manifolds.
Here the pullback homomorphism 
$$f^*: K^{prop}(\mC \xrightarrow {S} \m C^{\infty} /Y)\to K^{prop}(\mC \xrightarrow {C} \m C^{\infty}/X)$$
is defined by 
$$f^*([(V, Y, h)]) := [V',X,h'],$$
using the fiber square
$$\CD
V'@> {f'} >> V\\
@V h' VV @VV h V\\
X@> {f} >> Y, \endCD
$$
with $TV'$ oriented (or given a stable complex structure) by the short exact sequence 
$$ 0\to T_{f'} \to TV' \to f'^*TV\to 0,$$
 if $TV$ and the bundle of tangents to the fibers $T_f$ (and therefore also $T_{f'}\simeq h'^*T_f$)
are oriented (or stable complex).
\end{lem}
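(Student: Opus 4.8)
The plan is to verify directly that the assignment $f^*([(V,Y,h)]):=[(V',X,h')]$ (with $V',h'$ obtained from the fiber square over $f$) descends to a well-defined group homomorphism and is functorial, essentially mirroring the proof of Lemma \ref{lemma} in the algebraic setting but paying attention to the differential-topological subtleties: existence of fiber products and the behaviour of tangent/orientation data. First I would check that $V'=X\times_Y V$ is again a $\m C^{\infty}$-manifold. This holds because $f$ is a submersion, so $f$ is transverse to the map $h:V\to Y$, and a standard transversality argument shows the fiber product $X\times_Y V$ exists in $\m C^{\infty}$ and that the projection $f':V'\to V$ is again a submersion with $T_{f'}\simeq h'^{*}T_f$ (pullback of the bundle of tangents along the fibers), where $h':V'\to X$ is the other projection. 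In particular, when $\mC=\mC^{\infty}$ there is nothing more to do on the source side; the interesting point is only when $\mC=\mC^{\infty}_{(o)}$ or $\mC^{\infty}_{\bC}$.

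Next I would address the extra structure on $V'$. The short exact sequence $0\to T_{f'}\to TV'\to f'^{*}TV\to 0$ together with the hypothesis that $TV$ and $T_f$ (hence $T_{f'}\simeq h'^{*}T_f$) are oriented (resp. stable complex) endows $TV'$ with an orientation (resp. stable complex structure), using the fact that an extension of oriented bundles is canonically oriented (and similarly in the stable complex case). So $V'\in ob(\mC)$, and $[(V',X,h')]$ is a legitimate class in $K^{prop}(\mC\xrightarrow{S}\m C^{\infty}/X)$, provided $h'$ is proper; properness of $h'$ follows from properness of $h$ by base change, since properness is stable under pullback. Then I would check well-definedness on isomorphism classes: if $\phi:V\xrightarrow{\cong}V'' \in hom_{\mC}(V,V'')$ with $h=h''\circ S(\phi)$, the induced map on fiber products $\op{id}_X\times_Y\phi$ is an isomorphism in $\mC$ (compatible with the induced orientations/stable complex structures, since $\phi$ is) making the required triangle commute. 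Additivity is immediate because taking the fiber square commutes with disjoint unions in the $V$-variable, as already noted in the algebraic case just before Lemma \ref{lemma}, so $f^{*}$ respects the monoidal structure $\sqcup$ and hence descends to the Grothendieck groups. Contravariant functoriality, $(g\circ f)^{*}=f^{*}\circ g^{*}$ and $(\op{id})^{*}=\op{id}$, follows from the standard composition-of-fiber-squares argument, with the compatibility of the induced orientations (resp. stable complex structures) under the two successive short exact sequences.

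The main obstacle I anticipate is the differential-geometric bookkeeping of the orientation (resp. stable complex) data: unlike the algebraic-geometric setting where one works in the Grothendieck group $K^0(-)$ of vector bundles and identities like $TV'=f'^{*}TV+T_{f'}$ hold on the nose, here one must make genuine choices (an orientation is a choice of a connected component of the top exterior power; a stable complex structure is a homotopy class of bundle maps) and check that these choices are canonical and compatible under composition of fiber squares and under isomorphisms $\phi$. The cleanest way around this is to fix, once and for all, the convention that an orientation (resp. stable complex structure) on the total space of a short exact sequence $0\to E'\to E\to E''\to 0$ is the one determined by orientations (resp. stable complex structures) of $E'$ and $E''$ via $E\simeq E'\oplus E''$ (non-canonically as bundles, but canonically up to contractible choice), and then to invoke the associativity of this operation — i.e. the ``octahedron'' compatibility for a nested pair of short exact sequences — to get functoriality. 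Once this convention is in place, everything else is the same routine diagram-chasing as in Lemma \ref{lemma}.
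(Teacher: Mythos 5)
Your proposal is correct, and it fills in details that the paper leaves entirely unsaid: Lemma \ref{lemma2} (like its algebraic analogue Lemma \ref{lemma}) is stated without proof, the paper only remarking just before Lemma \ref{lemma} that ``taking such fiber squares commutes with disjoint unions.'' What you supply---existence of the $\m C^{\infty}$ fiber product via transversality of the submersion, stability of properness under base change, the identification $T_{f'}\simeq h'^*T_f$, induced orientation or stable complex structure on $TV'$ via the short exact sequence (with the point that a splitting is unique up to homotopy, so the induced structure is canonical), well-definedness on isomorphism classes, additivity, and the composition-of-fiber-squares argument together with the associativity of the ``SES orientation'' convention for functoriality---is exactly the routine verification the authors are implicitly invoking, and your identification of the orientation/stable-complex bookkeeping as the only non-algebraic subtlety is on target.
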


Then the proof of the following result is identical to that in the algebraic geometric context:
\begin{thm}[Verdier-type Riemann--Roch for smooth manifolds] \label{Verdier-RR-smooth}
Consider the cospan of categories
$$\mC \xrightarrow {S} \m C^{\infty} \xleftarrow {id_{\m C^{\infty}}} \m C^{\infty},$$ 
with $\mC = \mC_{(o)}^{\infty}$ or $\mC^{\infty}_{\bC}$ the category of all differentiable (oriented) or stable complex $\m C^{\infty}$-manifolds.
Let $c\ell$ be a  contravariant functorial multiplicative and normalized characteristic class
$$c\ell(E) \in  H^*(-;\bZ_2)\quad \text{or} \quad c\ell(E) \in  H^{2*}(-;R)$$ 
of (isomorphism classes of) real (oriented) or complex vector bundles.
For a smooth (oriented or stable complex) manifold $V$, let 
$$\alp(V) := c\ell(TV)\cap [V]\in H^{BM}_*(V;R).$$
The invariant $\alp$ is \emph{additive}, so that  
by Theorem \ref{main}
there exists a unique natural transformation
$$\tau_{c\ell}: K^{prop}_{\alp}(\mC \xrightarrow {S}  \m C^{\infty}/-) \to  H^{BM}_*(-;R),$$
such that for a smooth (oriented or stable complex) manifold $V$
$$\tau_{c\ell}([(V, V, id_{V})]) =  c\ell(TV)\cap [V].$$
Then this natural transformation $\tau_{c\ell}$ satisfies the following Verdier-type Riemann--Roch formula: For a ((complex) oriented) submersion 
 $f:X \to Y$  of $\m C^{\infty}$-manifolds, the following diagram commutes:
$$\CD
K^{prop}_{\alp}(\mC \xrightarrow {S} \m C^{\infty}/Y)  @> {\tau_{c\ell}} >> H^{BM}_*(Y;R)\\
@V f^* VV @VV c\ell(T_f) \cap f^* V\\
K^{prop}_{\alp}(\mC \xrightarrow {S} \m C^{\infty}/X) @>>  {\tau_{c\ell}}  > H^{BM}_*(X;R). \endCD
$$
\end{thm}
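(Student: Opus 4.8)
The plan is to imitate the proof of Theorem~\ref{Verdier-RR} essentially word for word, replacing its algebraic base-change and projection-formula inputs by the corresponding statements for Borel--Moore homology of $\m C^{\infty}$-manifolds. Since $\tau_{c\ell}$, $f^*$ and $c\ell(T_f)\cap f^*$ are all group homomorphisms, it is enough to verify, for a generator $[(V,Y,h)]_{\alp}$ of $K^{prop}_{\alp}(\mC\xrightarrow{S}\m C^{\infty}/Y)$, the identity
\[
\tau_{c\ell}\bigl(f^*[(V,Y,h)]_{\alp}\bigr)=c\ell(T_f)\cap f^*\bigl(\tau_{c\ell}([(V,Y,h)]_{\alp})\bigr).
\]
By Lemma~\ref{lemma2} and the definition of $\tau_{c\ell}$ the left-hand side is $\tau_{c\ell}([(V',X,h')]_{\alp})=h'_*\bigl(c\ell(TV')\cap[V']\bigr)$, where $V'=X\times_Y V$ together with $f':V'\to V$, $h':V'\to X$ is the fiber square of that lemma, while the right-hand side is $c\ell(T_f)\cap f^*\bigl(h_*(c\ell(TV)\cap[V])\bigr)$.

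Next I would run the chain of identities from the algebraic proof. Applying base change for Borel--Moore homology to the Cartesian square, with $h$ proper and $f$ an oriented (resp.\ complex oriented) submersion, gives $f^*h_*=h'_*f'^*$; together with the normalization $f'^*[V]=[V']$ of fundamental classes and the functoriality of the cap product, the right-hand side becomes $c\ell(T_f)\cap h'_*\bigl(c\ell(f'^*TV)\cap[V']\bigr)$. The projection formula for the proper map $h'$ rewrites this as $h'_*\bigl(h'^*c\ell(T_f)\cap c\ell(f'^*TV)\cap[V']\bigr)$, and the isomorphism $T_{f'}\simeq h'^*T_f$ of Lemma~\ref{lemma2}, with functoriality of $c\ell$, turns it into $h'_*\bigl(c\ell(T_{f'})\cap c\ell(f'^*TV)\cap[V']\bigr)$. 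Finally the (smoothly split) short exact sequence $0\to T_{f'}\to TV'\to f'^*TV\to 0$ of Lemma~\ref{lemma2} and the multiplicativity and normalization of $c\ell$ give $c\ell(T_{f'})\cup c\ell(f'^*TV)=c\ell(TV')$, so the right-hand side equals $h'_*\bigl(c\ell(TV')\cap[V']\bigr)$, which is the left-hand side computed above. Hence the square commutes on generators, and therefore everywhere.

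The step I expect to be the main obstacle is the only genuinely new ingredient compared with the algebraic situation: establishing the Borel--Moore Gysin pullback $f^*$ along an oriented (resp.\ complex oriented) submersion of $\m C^{\infty}$-manifolds --- its functoriality, the fact that it sends fundamental classes to fundamental classes, and above all its compatibility with proper pushforward in a Cartesian square, i.e.\ $f^*h_*=h'_*f'^*$. This is standard (a submersion is locally a projection, and base change reduces to this local model together with the compatibility of $\cap$ with $\times$ and with proper pushforward), but it is where the argument really departs from the purely formal manipulations; once it is in place, everything else is identical to the proof of Theorem~\ref{Verdier-RR}.
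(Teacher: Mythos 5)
Your proposal is exactly the paper's own argument: the paper disposes of this theorem with the single remark that the proof is identical to the algebraic case (Theorem~\ref{Verdier-RR}), and you correctly unwind that chain of base change, projection formula, $T_{f'}\simeq h'^*T_f$, functoriality of cap, and multiplicativity of $c\ell$ applied to $TV'=T_{f'}\oplus f'^*TV$, with the only substantive new input being the Borel--Moore Gysin pullback for (complex-)oriented submersions and its compatibility with proper pushforward in a Cartesian square. The minor reordering of the cap-functoriality and projection-formula steps relative to the algebraic proof is immaterial.
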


\section{Examples}

In this last section we discuss some results, questions and problems related to some very specific examples,
which also fit with our natural transformations associated to some additive (homology) invariants.

\subsection{The case of the fundamental class}
Let us first consider taking the fundamental class $[-]\in H^{BM}_*(-;R)$ for $R=\bZ_2$ (or $R=\bZ$)  on the category $\mC^{\infty}_{(o)}$ of (oriented) $C^{\infty}$-manifolds. The fundamental class $\alp:=[-]$ is certainly an additive and multiplicative homology class and by Corollary \ref{smooth-gen} we have a unique natural transformation
$$\tau_{[-]}: K^{prop}_{\alp}(\mC \xrightarrow {S} \mTOP_{lc} /-) \to H^{BM}_*(-;R)$$
such that $\tau_{[-]}([(V, V, id_{V})]) = [V]$
for a smooth (oriented) manifold $V$. Here an $\alp$-isomorphism $\phi: V\to V'$ is just an (orientation preserving) diffeomeorphism
by Remark \ref{or-preserving}.

Of course such an additive (and multiplicative) fundamental homology class $\alp:=[-]$ is
also available on bigger categories of spaces, like the categories $\mC^{0}_{(o)}$ of (oriented) topological manifolds, or
 $\m C^{\text{pseudo}}_{(o)}$  the category of (oriented) triangulated or topological pseudo-manifolds, whose morphisms are by definition just
continuous maps of the underlying topological spaces (so that an isomorphism is nothing but a homeomorphism).
If we restrict ourselfes to compact spaces (as classically often done), then these fundamental classes live in usual homology
$H_*(-;R)$ for $R=\bZ_2$ (or  $R=\bZ$ in the oriented context).
Moreover such an additive and multiplicative fundamental class $\alp=[-]\in H_*(-;R)$ is also available for the category
$\m C^{\text{Poincar\'e}}_{(o)}$  of finite  (oriented) Poincar\'e complexes, i.e., topological spaces $V$ (like homology-manifolds) which satisfy Poincar\'e duality
$$\cap [V]: H^*(V;R)\stackrel{\sim}{\to} H_*(V;R)$$
for a suitable fundamental class $[V]\in H_*(V;R)$ . So we can turn these categories $\m C$ as before into symmetric monoidal categories with respect to disjoint unions $\sqcup$
(and also products $\times$) such that the forgetful functor $\frak f: \m C\to \mTOP_{lc}$ is strictly monoidal.
Moreover, an $\alp$-isomorphism $\phi: V\to V'$ in this context is just an (orientation preserving) homeomorphism.\\

Then the classical \emph{Steenrod's realization problem} can be reinterpreted as the problem of asking for the surjectivity of the group homomorphism $\tau_{[-]}: K_{\alp}(\mC \xrightarrow {\frak f} \mTOP /X) \to H_*(X;R)$ for a topological space $X$.
Here the following results are well known (see \cite{Rudyak2, Sullivan2}).
\begin{thm}\label{rudyak}
\begin{enumerate}
\item (\cite{Thom} and \cite[Chapter IV, Theorem 7.33]{Rudyak})
$$\tau_{[-]}: K(\mC^{\infty}_c \xrightarrow {\frak f} \mTOP /X) \to  H_*(X;\bZ_2)$$
 is surjective, i.e. every $\bZ_2$-homology class can be realized by a compact smooth manifold.
\item (\cite{Thom} and \cite[Chapter IV, Theorem 7.37]{Rudyak}) The following composed map is surjective:
$$\tau_{[-]}: K_{\alp}(\mC^{\infty}_{co} \xrightarrow {\frak f} \mTOP /X) \to H_*(X;\bZ) \stackrel{proj.}{\to}
\bigoplus _{0 \leq i \leq 6} H_i(X;\bZ),$$
i.e. every $\bZ$-homology class in degree $i\leq 6$ can be realized by a compact oriented smooth manifold.
\item (\cite{Levitt}) The following composed map is surjective:
$$\tau_{[-]}: K_{\alp}(\m C^{\text{Poincar\'e}}_{co} \xrightarrow {\frak f} \mTOP /X) \to H_*(X;\bZ) \stackrel{proj.}{\to}
\bigoplus _{i \not =3 } H_i(X;\bZ),$$
i.e. every $\bZ$-homology class in degree $i\neq 3$ can be realized by a finite oriented Poincar\'e complex.
\item (\cite{Sullivan} and \cite[Chapter VIII, Example 1.25(a)]{Rudyak})  
$$\tau_{[-]}: K_{\alp}(\m C^{\text{pseudo}}_{co} \xrightarrow {\frak f} \mTOP /X) \to H_*(X;\bZ)$$
 is surjective, i.e. every $\bZ$-homology class can be realized by a compact oriented pseudo-manifold.
\end{enumerate}
\end{thm}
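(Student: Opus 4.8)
The plan is to recognize that, after unwinding the definitions, each of the four surjectivity assertions is literally one of the classical realization theorems cited in the statement, so the ``proof'' amounts to a dictionary between the present formalism and those theorems. First I would unwind $\tau_{[-]}$: since $\tau_{[-]}([(V,\frak f(V),h)]_{(\alp)}) = h_*[V]$ by the formula of Theorem~\ref{main}, the image of $\tau_{[-]}$ is exactly the set of classes of the form $h_*[V]$, with $V$ a compact (oriented) object of $\mC$ and $h\colon \frak f(V)\to X$ continuous, i.e. the classes ``realizable by an object of $\mC$''. Hence surjectivity of $\tau_{[-]}$ onto $H_*(X;R)$ (resp.\ onto the indicated subgroup) is, tautologically, the corresponding Steenrod-type realization problem.

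Second, to line this up with the cited sources, which are phrased via bordism, I would note that $\alp=[-]$ is a bordism invariant: if $(V,h)$ and $(V',h')$ are bordant in $\mC$ by an oriented bordism $H\colon W\to X$, then $[V]-[V']=\partial[W]$ maps to $0$ in $H_*(W)$, so $h_*[V]=h'_*[V']$. Thus $\tau_{[-]}$ descends to the bordism quotient of $K_{(\alp)}(\mC\xrightarrow{\frak f}\mTOP/X)$, which --- as recalled in \S\ref{intro} for the smooth case, and analogously in the Poincar\'e and pseudo-manifold cases --- is the unoriented bordism group for (1), the oriented bordism group $\Omega^{SO}_*(X)$ for (2), the oriented Poincar\'e bordism group for (3), and the oriented pseudo-manifold bordism group for (4), with induced map the usual fundamental-class transformation $[V\xrightarrow{h}X]\mapsto h_*[V]$. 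Since the bordism quotient map is surjective, $\tau_{[-]}$ has the same image as this transformation, and it remains only to quote: (1) Thom's result that $MO$ is a wedge of suspensions of $H\bZ_2$, so unoriented bordism surjects (split) onto $H_*(-;\bZ_2)$; (2) Thom's analysis of the oriented case, in which the cohomology-operation obstructions to realizing an integral class by a smooth oriented manifold vanish automatically in degrees $\le 6$; (3) Levitt's computation of Poincar\'e bordism, which realizes every integral class in degrees $\neq 3$; (4) Sullivan's observation that an integral simplicial cycle, through its support and after normalization, is the fundamental cycle of an oriented pseudo-manifold mapping to $X$ (see \cite{Thom,Rudyak,Levitt,Sullivan}).

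The one genuinely new point to verify is the identification used in the second step --- that the ``$\alp$-isomorphism plus bordism'' quotient of $K_{(\alp)}(\mC\xrightarrow{\frak f}\mTOP/X)$ is the relevant classical bordism group. This is immediate for smooth (un)oriented manifolds, where by Remark~\ref{or-preserving} an $\alp$-isomorphism is just an (orientation-preserving) diffeomorphism and the bordism relation is the classical one; for finite oriented Poincar\'e complexes and for compact oriented pseudo-manifolds it is a matter of matching conventions with the literature. The ``hard part'' is nothing we prove here: it is the content of the quoted theorems --- Thom's computation of the (co)bordism rings and of the Steenrod realization obstructions, Levitt's surgery-theoretic study of Poincar\'e bordism, and Sullivan's geometric realization by pseudo-manifolds --- all of which lies entirely outside the categorical framework of this paper, whose role here is only to give a uniform reinterpretation of these statements.
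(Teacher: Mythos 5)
Your proposal is correct and matches the paper's intent: the paper itself gives no proof of Theorem \ref{rudyak}, merely stating that after unwinding the definition of $\tau_{[-]}$ (so that its image consists precisely of the classes $h_*[V]$), each item is literally the cited classical realization theorem of Thom, Levitt, or Sullivan. Your additional observation that $\tau_{[-]}$ factors through the corresponding bordism group --- with the quotient map from $K_{(\alp)}(\mC\xrightarrow{\frak f}\mTOP/X)$ surjective, hence image-preserving --- is a useful way to line this up with the bordism-theoretic phrasing of the references, and is consistent with the discussion of $\Omega^{SO}_*$ in the introduction.
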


\subsection{The case of the Stiefel--Whitney class}
Let $V$ be a differentiable manifold and let $c\ell_*(V) \in H^{BM}_*(V, \bZ_2)$ be the Poincar\'e dual $c\ell(TV) \cap [V]$ of a 
multiplicative and normalized functorial characteristic class $c\ell(E)\in H^*(-, \bZ_2)$ of (isomorphism classes of) real vector bundles.
$\alp(V):=c\ell_*(V)$ is clearly an additive (and multiplicative) homology class,
and we have by Corollary \ref{smooth-gen}
 a unique natural transformation
$$\tau_{c\ell_*}: K^{prop}(\mC^{\infty} \xrightarrow {S} \mTOP_{lc} /-) \to H^{BM}_*(-;\bZ_2)$$
such that $\tau_{c\ell_*}([(V, V, id_{V})]) = c\ell_*(V)$
for a smooth  manifold $V$.
In particular the Stiefel--Whitney class $c\ell=w$ is a typical one. \\

 If we restrict ourselves to the category $\m V_{\bR}$ of real algebraic varieties and let $\m V^{sm}_{\bR}$ be its full subcategory of smooth real algebraic varieties, then we have a more geometric ``realization" of the natural transformation $w_*$ on the category $\m V_{\bR}$
through \emph{$\bZ_2$-valued semi-algebraic constructible functions}:
$$\xymatrix{K^{prop} (\m V^{sm}_{\bR} \xrightarrow {\iota} \mathcal {V_{\bR}} /X) \ar[dr]_ {w_*}\ar[rr]^ {const} && F(X;\bZ_2) \ar[dl]^{w_*}\\& 
H^{BM}_*(X, \bZ_2)\:.}$$
Here the multiplicative natural transformation $const$ is defined by the isomorphism invariant $\alp(V):=1_V$, which is additive and multiplicative.
The Stiefel-Whitney class transformation 
$w_*: F(X;\bZ_2)\to H^{BM}_*(X, \bZ_2)$ was first constructed by Sullivan \cite{Sullivan} in the pl-context.
For a new approach in the semi-algebraic (or subanalytic) context through the theory of ``conormal or characteristic cycles'' 
see \cite{FMc}.

\begin{rem}\label{Thom-Whitney} For a compact topological manifold $V$, Thom constructed a Whitney class using a relation with Steenrod squares \cite{Thom-7} (see \cite {MacPherson2}). Let us denote  this  Thom-Whitney class in homology by $w_*^{Th}(V) \in H_*(V;\bZ_2)$,
which for a compact smooth manifold $V$ agrees with the Stiefel-Whitney class $w_*(V)$ above.
Then also $\alp(V):= w_*^{Th}(V)$ is an additive (and multiplicative) invariant, so that we have a natural transformation
$$\tau_{w_*}^{Th}: K(\mC^{0}_c  \xrightarrow {\frak f} \mTOP /-) \to H_*(-;\bZ_2)$$
defined by
$$\tau_{w_*}^{Th}([(V, X, h)]) = h_*w_*^{Th}(V).$$
If we consider the above Whitney class transformation 
$$w_*: K(\mC^{\infty}_c \xrightarrow {\frak f} \mTOP/-) \to H_*(-;\bZ_2)$$
for compact smooth $C^{\infty}$-manifolds, then this is surjective by Theorem \ref{rudyak}(i), since $w_*(V)=[V]+ \dots$ 
equals the fundamental class plus lower order terms. So a natural problem is to find
for a given compact topological manifold $X$ a class $\beta \in K(\mC^{\infty}_c\xrightarrow {\frak f} \mTOP/X)$ such that 
$$w_*(\beta) = w_*^{Th}(X).$$ 
\end{rem}

\subsection{The case of the Pontryagin and L-class}
Let $V$ be an oriented differentiable manifold and let $c\ell_*(V) \in H^{BM}_{2*}(V,\bQ)$ be the Poincar\'e dual $c\ell(TV) \cap [V]$ of a 
multiplicative and normalized functorial characteristic class $c\ell(E)\in H^{2*}(-, \bQ)$ of (isomorphism classes of) oriented real vector bundles.
$\alp(V):=c\ell_*(V)$ is clearly an additive (and multiplicative) homology class,
and by Corollary \ref{smooth-gen} we have 
 a unique natural transformation
$$\tau_{c\ell_*}: K^{prop}_{\alp}(\mC^{\infty}_o \xrightarrow {S} \mTOP_{lc} /-) \to H^{BM}_{2*}(-;\bQ)$$
such that $\tau_{c\ell_*}([(V, V, id_{V})]) = c\ell_*(V)$
for a smooth oriented manifold $V$.
In particular the Pontryagin class $c\ell=p$ and the Hirzebruch-Thom $L$-class $c\ell=L$ are typical ones. \\

If we  restrict ourselves to the category $\m  V_{\bC,c}$ of compact complex algebraic varieties
and let $\m V^{sm}_{\bC,c}$ be its full subcategory of compact smooth complex algebraic varieties, then we have a more geometric ``realization" of the natural transformation $L_*$ on the category $\m V_{\bC,c}$
 through 
\emph{Cappell--Shaneson--Youssin's cobordism groups $\Omega_*(X)$}(see \cite{CS}, \cite{Youssin}) and the motivic relative Grothendieck
group as mentioned in the introduction (compare with \cite{BSY} for more details):
$$\begin{CD}
K(\m V^{sm}_{\bC,c} \xrightarrow {\iota} \m V_{\bC,c}/X) @> taut >> K_0(\m V_{\bC}/X) \\
@V L_* VV @VV sd V \\
H_{2*}(X, \bQ) @<< L^{CS}_* < \Omega(X) .
\end{CD}$$
Here the (composed) multiplicative natural transformation 
$$sd: K(\m V^{sm}_{\bC,c} \xrightarrow {\iota} \m V_{\bC,c}/X) \to \Omega(X)$$ 
is defined by the isomorphism invariant $\alp(V):=[\bQ_V[dim\;V]]\in \Omega(V)$, which is additive and multiplicative.

\begin{rem} For a compact triangulated $\bQ$-homology manifold $V$, Thom constructed in \cite{Thom-8}
an $L$-class in $H^{2*}(V;\bQ) \simeq H_{2*}(V;\bQ)$
using a relation with the signature (see \cite {MacPherson2, Sullivan2}), so that for a compact oriented smooth manifold
it agrees with the usual $L$-class $L(TV)$ of the tangent bundle by the famous \emph{Hirzebruch signature theorem}.
Through the development of \emph{Intersection (co)homology}, this approach was further extended (by e.g. Goresky-MacPherson \cite{GM} and
Cappell-Shaneson \cite{CS})
to more general singular spaces
like compact (locally) pure-dimensional complex algebraic varieties. But then one has to view this class as a
homology $L$-class in  $H_{2*}(V;\bQ)$. Nevertheless, in Thom's original approach for a compact triangulated $\bQ$-homology manifold $V$ only,
one can use Poincar\'e duality to view this as a cohomology class in $H^{2*}(V;\bQ)$. Using the cup-product structure on cohomology,
he was then able to define also a Pontryagin class $p(V)\in H^{2*}(V;\bQ)$ so that for a compact oriented smooth manifold
it agrees with the usual Pontryagin class $p(TV)$ of the tangent bundle.

Let us call the corresponding homology classes the Thom--Pontryagin and Thom $L$-class, denoted by 
$$p_*^{Th}(X), L^{Th}_*(X) \in H_{2*}(X;\bQ).$$
If we consider the above Pontryagin- or $L$-class transformation for $c\ell=p$ or $L$ in the context of compact oriented smooth manifolds
$$\tau_{c\ell_*}: K_{\alp}(\mC^{\infty}_{co} \xrightarrow {S} \mTOP /-) \to H_{2*}(-;\bQ),$$
then for a given compact triangulated $\bQ$-homology manifold $X$ it is a very interesting  problem to find a class 
$\beta \in K(\mC^{\infty}_{co} \xrightarrow {S} \mTOP /X)$ such that 
$$p_*(\beta) = p_*^{Th}(X) \quad \text{or} \quad  L_*(\beta)=L^{Th}_*(X).$$ 
Note that the rationalized group homomorphism $\tau_{c\ell_*}\otimes \bQ$ is surjective by \cite{Thom} (compare \cite[Chapter IV, Theorem 7.36]{Rudyak}),
since $c\ell_*(V)=[V]+ \dots$ 
equals the fundamental class plus lower order terms.
\end{rem}

\subsection {The case of the Chern class}
Let $V$ be a stable complex differentiable manifold and let $c\ell_*(V) \in H^{BM}_{2*}(V,R)$ be the Poincar\'e dual $c\ell(TV) \cap [V]$ of a 
multiplicative and normalized functorial characteristic class $c\ell(E)\in H^{2*}(-, R)$ of (isomorphism classes of) complex vector bundles.
$\alp(V):=c\ell_*(V)$ is clearly an additive (and multiplicative) homology class,
and we have by Corollary \ref{smooth-gen}
 a unique natural transformation
$$\tau_{c\ell_*}: K^{prop}(\mC^{\infty}_{\bC} \xrightarrow {S} \mTOP_{lc} /-) \to H^{BM}_{2*}(-;R)$$
such that $\tau_{c\ell_*}([(V, V, id_{V})]) = c\ell_*(V)$
for a smooth stable complex manifold $V$.
In particular the Chern class $c\ell=c$ is a typical one. \\

If we  restrict ourselves to the category $\m  V_{\bC}$ of complex algebraic varieties
and let $\m V^{sm}_{\bC}$ be its full subcategory of  smooth complex algebraic varieties, then we have a more geometric ``realization" of the natural transformation $c_*$ on the category $\m V_{\bC}$
 through constructible functions
 and the motivic relative Grothendieck
group as mentioned in the introduction (
see \cite{BSY} for more details):
$$\begin{CD}
K^{prop}(\m V^{sm}_{\bC} \xrightarrow {\iota} \m V_{\bC}/X) @> taut >> K_0(\m V_{\bC}/X) \\
@V c_* VV @VV const V \\
H^{BM}_{2*}(X, \bZ) @<< c^{Mac}_* < F(X) .
\end{CD}$$
Here the (composed) multiplicative natural transformation 
$$const: K^{prop}(\m V^{sm}_{\bC} \xrightarrow {\iota} \m V_{\bC}/X) \to F(X)$$ 
is defined by the isomorphism invariant $\alp(V):=\jeden_V\in F(V)$, which is additive and multiplicative.








\subsection {The case of Chern classes of other types}

Let $\m V^{emb}_{\bC}$ be the subcategory of (complex) algebraic varieties embeddable into smooth varieties and let $c_*^{FJ}(X)$ resp., $c_*^{FJ}(X)$ be Fulton--Johnson's Chern class resp., Fulton's canonical class defined for such an embeddable (complex) algebraic variety:
 $c_*^{FJ}(X)$ (\cite[Example 4.2.6 (c)]{Fulton-book}) is defined by
$$c_*^{FJ}(X):= c(TM|_X) \cap s(\m N_XM),$$
where $TM$ is the tangent bundle of $M$ and $s(\m N_XM)$ is the Segre class of the conormal sheaf $\m N_XM$ of $X$ in $M$ \cite[\S 4.2]{Fulton-book}. Fulton's canonical class $c_*^F(X)$ (\cite[Example 4.2.6 (a)]{Fulton-book}) is defined by
$$c_*^F(X) := c(TM|_X) \cap s(X,M),$$
where $s(X,M)$ is the relative Segre class \cite[\S 4.2]{Fulton-book}.
For a local complete intersection variety $X$ we also have a normal bundle $N_XM$ in $M$, from which we can define the virtual tangent bundle $T_X$ of $X$ 
(as in \S 3) by
$$T_X := TM|_X - N_XM \in K^0(X).$$
As shown in \cite[Example 4.2.6]{Fulton-book}, for a local complete intersection variety $X$ in a non-singular variety $M$, these two Chern classes are both equal to the virtual Chern class
$$c_*^{FJ}(X) = c_*^F(X) = c(T_X) \cap [X].$$

Moreover, both isomorphism invariants $\alp(V):=c_*^{FJ}(V)$ and $c_*^{F}(V)$ are additive for $V\in ob(\m V^{emb}_{\bC})$, so
that there
exists  unique natural transformations on the category $\m V_{\bC}$:
$$\tau_{c_*^F}, \tau_{c_*^{FJ}}  : K^{prop}(\m V^{emb}_{\bC} \xrightarrow {\iota} \m V_{\bC}/-) \to H^{BM}_{2*}(-; \bZ),$$
such that 
$$\tau_{c_*^F}([V \xrightarrow{\op{id}_V}  V]) = c_*^F(V) \quad \text{resp.} \quad \tau_{c_*^{FJ}}  ([V \xrightarrow{\op{id}_V}  V]) = c_*^{FJ}(V)$$  
for $V \in ob(\m V^{emb}_{\bC})$.
Finally, using Chow groups $CH_*(X)$ as a corresponding homology theory, all of this remains true over any base field $k$ (instead of working over
$\bC$ with Borel-Moore homology), with the invariant $\alp(V)=c_*^F(V)$ for $V \in ob(\m V^{emb}_k)$ also multiplicative
(as follows from \cite[Example 4.2.5]{Fulton-book}).\\

\noindent
{\bf Acknowlwdgements.} We would like to thank Paolo Aluffi and Markus Banagl for useful discussions, and also Yuli B. Rudyak for providing the precise reference for Theorem \ref{rudyak}. 
Finally we would like to thank L\^e D\~ung Tr\'ang for useful suggestions and the referee for his/her careful reading and valuable suggestions and comments.

\end{document}